\def\thefigure{\thesection.\@arabic\c@figure}
\def\fps@figure{h,t}
\def\thetable{\thesection.\@arabic\c@table}
\def\fps@table{h, t}
\newtheorem{theorem}{Theorem}
\newtheorem*{theorem*}{Theorem}
\newtheorem{lemma}[theorem]{Lemma}
\newtheorem{proposition}[theorem]{Proposition}
\newtheorem{remark}[theorem]{Remark}
\numberwithin{theorem}{section}
\numberwithin{equation}{section}
\newcommand{\0}{{\bf 0}}
\renewcommand{\1}{{\bf 1}}
\newcommand{\Ad}{{\rm Ad}}
\newcommand{\ad}{{\rm ad}}
\newcommand{\Bun}{\text{{\boldmath{$\mathfrak{B}$}}}}
\newcommand{\de}{{\rm d}}
\newcommand{\ee}{{\rm e}}
\newcommand{\End}{{\rm End}}
\newcommand{\Hom}{{\rm Hom}}
\newcommand{\I}{{\rm I}}
\newcommand{\id}{{\rm id}}
\newcommand{\ie}{{\rm i}}
\newcommand{\Ker}{{\rm Ker}\,}
\newcommand{\Prim}{{\rm Prim}}
\newcommand{\pr}{{\rm pr}}
\renewcommand{\Re}{{\rm Re}}
\newcommand{\rk}{{\rm rk}}
\newcommand{\spec}{{\rm spec}}
\newcommand{\CC}{{\mathbb C}}
\newcommand{\KK}{{\mathbb K}}
\newcommand{\NN}{{\mathbb N}}
\newcommand{\RR}{{\mathbb R}}
\newcommand{\TT}{{\mathbb T}}
\newcommand{\ZZ}{{\mathbb Z}}
\newcommand{\Bc}{{\mathcal B}}
\newcommand{\Cc}{{\mathcal C}}
\newcommand{\Gc}{{\mathcal G}}
\newcommand{\Hc}{{\mathcal H}}
\newcommand{\Kc}{{\mathcal K}}
\newcommand{\Oc}{{\mathcal O}}
\newcommand{\Vc}{{\mathcal V}}
\newcommand{\Xc}{{\mathcal X}}
\newcommand{\ag}{{\mathfrak a}}
\newcommand{\cg}{{\mathfrak c}}
\newcommand{\dg}{{\mathfrak d}}
\renewcommand{\gg}{{\mathfrak g}}
\newcommand{\hg}{{\mathfrak h}}
\newcommand{\kg}{{\mathfrak k}}
\renewcommand{\ng}{{\mathfrak n}}
\newcommand{\tg}{{\mathfrak t}}
\newcommand{\zg}{{\mathfrak z}}
\newcommand{\matt}[2]
{\ensuremath{\begin{pmatrix}
			{#1} & 0 \\
			0 & {#2}
\end{pmatrix}}}
\newcommand{\mattt}[3]
{\ensuremath{\begin{pmatrix}
			{#1} & & 0\\
			& {#2} & \\
			0& & {#3}
\end{pmatrix}}}
\newcommand{\mathsout}[1]
{\bgroup\mathchoice
	{\sbox0{$\displaystyle{#1}$}%
		\usebox0\hspace{-\wd0}%
		\rule[0.5\ht0-0.5\dp0-.5pt]{\wd0}{1pt}}%
	{\sbox0{$\textstyle{#1}$}%
		\usebox0\hspace{-\wd0}%
		\rule[0.5\ht0-0.5\dp0-.5pt]{\wd0}{1pt}}%
	{\sbox0{$\scriptstyle{#1}$}%
		\usebox0\hspace{-\wd0}%
		\rule[0.5\ht0-0.5\dp0-.5pt]{\wd0}{1pt}}%
	{\sbox0{$\scriptscriptstyle{#1}$}%
		\usebox0\hspace{-\wd0}%
		\rule[0.5\ht0-0.5\dp0-.5pt]{\wd0}{1pt}}%
	\egroup}
\title
[Rigidity of the Heisenberg group]
{$C^*$-rigidity of the Heisenberg group}
\author{Ingrid Belti\c t\u a}
\author{Daniel Belti\c t\u a}
\address{Institute of Mathematics ``Simion Stoilow'' of the Romanian Academy,
	P.O. Box 1-764, Bucharest, Romania}
\email{Ingrid.Beltita@imar.ro, ingrid.beltita@gmail.com}
\email{Daniel.Beltita@imar.ro, beltita@gmail.com}
\keywords{solvable Lie group of class R; $C^*$-rigidity; Heisenberg group}
\subjclass[2020]{Primary 22E27; Secondary 22D25, 22E25, 17B30}
\thanks{We acknowledge financial
	support from the Research Grant GAR 2023 (code 114), supported from the Donors’
	Recurrent Fund of the Romanian Academy, managed by the ”PATRIMONIU”
	Foundation.}
\date{\today}
\begin{document}
\parskip4pt


\begin{abstract}
We prove that the Heisenberg groups can be distinguished from the other connected and simply connected Lie groups via their group $C^*$-algebras. 
The main step of the proof is a characterization of the nilpotent Lie groups among the solvable Lie groups solely in terms of topological properties of their coadjoint orbits.
\end{abstract}
	
\maketitle

\section{Introduction}\label{intro}

We study the question of the extent to which a Lie group can be distinguished from the other Lie groups in terms of its corresponding unitary representation theory as encoded in the group $C^*$-algebra. 
More precisely, 
given a class of locally compact groups $\Gc$, a group $G_0\in \Gc$ is called $C^*$-rigid 
within~$\Gc$ if for arbitrary  $G\in \Gc$ we have 
\begin{equation}
\label{intro_eq1}	
C^*(G_0)\simeq C^*(G) \implies
G_0\simeq G.
\end{equation}
Rigidity problems for  locally compact groups have been considered before with $C^*$-algebras replaced by smaller $L^1$ Banach algebras, or commutative  Banach algebras that inherit more information on the structure of the group, like the Fourier algebras or the Fourier-Stieltjes algebras, or
 unitary representations replaced by representations on Banach spaces with relatively few operators; see \cite{GT22} and the references therein. 
 Some of the simplest examples of non-$C^*$-rigid groups are the tori $\TT^n$ for $n=1,2,\dots$. 
 	This is a sequence of compact abelian Lie groups  
 	whose group $C^*$-algebras are mutually $*$-isomorphic since they are commutative and have homeomorphic spectra.
The situation is considerably more complicated in the case of $C^*$-algebras on non-commutative groups, as there are examples of simply connected Lie groups that are not $C^*$-rigid, 
a phenomenon first pointed out in \cite[Subsect.~4]{Ros76} and later systematically studied, e.g., 
in~\cite[Thms. 2.2 and 2.7]{BB-IEOT} for Lie groups of the form $\RR^m\rtimes \RR$, 
having a 1-codimensional abelian normal closed subgroup.

However, one expects that \textit{a priori} information on the structure of the group $G_0$ would lead to more information on their rigidity properties. 
In the case of abelian groups 
 the rigidity problem is solved, via the Pontryagin duality, within the class of locally  groups. 
 For a discussion on various dual objects of locally compact groups, see \cite{BkH20}.

The study of $C^*$-rigidity  is particularly interesting within the
 class of Lie groups, 
due to their role in Mathematical Physics and Harmonic Analysis.
We will focus on the class
$\mathcal{LG}$ consisting of the  connected, simply connected Lie groups since
these Lie groups are uniquely determined by their Lie algebras.
The rigidity problem is not easier in this class, as the example of exponential Lie groups shows. 
By definition, a Lie group $G$ with its Lie algebra $\gg$ is exponential 
if the exponential map $\exp_G\colon\gg\to G$ is a diffeomorphism; we say that $G$  belongs to $\mathcal{EG}$. 
For groups in the class $\mathcal{EG}$, the Kirillov theory gives a  suitable homeomorphism between the unitary dual space of the group and the the space of coadjoint orbits, which is
 a topological space that is easier to handle and reflects structure of the group. 
However, even  
within $\mathcal{EG}$, not every group is $C^*$-rigid, 
as pointed out above. 
Nevertheless, for the smaller class of connected and simply connected nilpotent Lie groups, there are examples of groups, including the Heisenberg groups, that are $C^*$-rigid within the class $\mathcal{EG}$, see \cite{BB-JTA}.

In the present paper we prove that the Heisenberg groups $H_{n}$, $n\ge 1$, are $C^*$-rigid in the wider class~$\mathcal{LG}$; 
we thus obtain the first examples of noncommutative groups that are $C^*$-rigid within the class $\mathcal{LG}$ (Theorem~\ref{main-main_thm}). 
 To do this, we need the intermediate class of connected and simply connected solvable Lie  groups $\mathcal{SG}$, 
$$
\mathcal{EG}
\subsetneqq\mathcal{SG}\subsetneqq\mathcal{LG}.$$  
Our step-by-step approach to proving \eqref{intro_eq1} 
for $H_{n}\in \mathcal{EG}$ and arbitrary $G\in \mathcal{LG}$ 
could be thus summarized:  if $C^*(G) \simeq C^*(H_n)$, then 
$$
G\in\mathcal{LG}\mathop{\implies}\limits^{\rm Step 1}
G\in\mathcal{SG}\mathop{\implies}\limits^{\rm Step 2}
G\in\mathcal{EG} \mathop{\implies}\limits^{\rm Step 3}
G\simeq H_n .$$    
Here Step 1 is taken care of by Proposition~\ref{solv}, 
while Step 3 relies on the results of  \cite{BB-JTA}. 
The main novelty is required by Step~2, and it may hold an independent interest; 
namely, we obtain the following characterization of nilpotent Lie groups within the class of $\mathcal{SG}$ solely in terms of the topology of their coadjoint orbits.
\begin{theorem}\label{IRS}
Let $G$ be a 1-connected solvable Lie group.
Then $G$ is nilpotent if and only if 
its coadjoint orbits are 
closed subsets of $\gg^*$ and 
simply connected. 
\end{theorem}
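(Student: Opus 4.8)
The plan is to prove both implications, dispatching the forward one quickly and organizing the converse around a single homotopy-theoretic reduction. For $\ell\in\gg^*$ write $\Oc_\ell:=\{\Ad^*(g)\ell : g\in G\}$ for its coadjoint orbit and $G_\ell$ for the stabilizer, so that $\Oc_\ell\cong G/G_\ell$ as a homogeneous space. Since $G$ is $1$-connected, the fibration $G_\ell\hookrightarrow G\to G/G_\ell$ together with its long exact homotopy sequence yields $\pi_1(\Oc_\ell)\cong\pi_0(G_\ell)$; hence a coadjoint orbit is simply connected if and only if the corresponding stabilizer is connected. For the forward implication, assume $G$ nilpotent. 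The coadjoint orbits of a $1$-connected nilpotent Lie group are closed, this being classical (they are in fact diffeomorphic to Euclidean spaces). Moreover $G_\ell=\exp(\gg_\ell)$, where $\gg_\ell=\{Y\in\gg:\ell([Y,Z])=0\ \forall Z\in\gg\}$, because $\exp$ is a diffeomorphism in the nilpotent case; thus $G_\ell$ is connected and, by the reduction above, $\Oc_\ell$ is simply connected.

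For the converse I argue by contraposition: assuming $G$ solvable, $1$-connected and not nilpotent, I produce a coadjoint orbit that is either not closed or not simply connected. Non-nilpotency means some $\ad_{X_0}$ fails to be nilpotent, and I split according to its spectrum. If $G$ is not of class $R$, then $\ad_{X_0}$ has an eigenvalue off the imaginary axis. Here I exploit that $0\in\gg^*$ is a coadjoint fixed point, so $\Oc_0=\{0\}$. Since $\ad^*_{X_0}=-(\ad_{X_0})^{\!\top}$ then also has an eigenvalue with nonzero real part, there is a nonzero $v\in\gg^*$ in a real $\ad^*_{X_0}$-invariant subspace on which, after fixing the sign of time, $\Ad^*(\exp tX_0)v=\ee^{t\,\ad^*_{X_0}}v\to 0$ as $t\to+\infty$. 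As $v\neq0$ we get $0\in\overline{\Oc_v}\setminus\Oc_v$, so $\Oc_v$ is not closed.

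It remains to treat the class $R$ case, where every $\ad_X$ has purely imaginary spectrum but $\gg$ is not nilpotent. Then the semisimple part $S:=(\ad_{X_0})_s$ is nonzero with nonzero imaginary eigenvalues, so $s\mapsto\ee^{sS}$ has relatively compact image in $\GL(\gg)$ whose closure is a torus $T$ of dimension $\ge 1$. If $\dim T=1$ the rotation is periodic, say $\ee^{s_0S}=\id$ with $s_0>0$; since a $1$-connected solvable group has no nontrivial compact subgroup, $g_0:=\exp(s_0X_0)\neq e$, while $\Ad(g_0)=\ee^{s_0S}\ee^{s_0N}=\ee^{s_0N}$ is unipotent, where $N=(\ad_{X_0})_n$. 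I then choose $\ell$ so that $g_0$, corrected by a translation in the nilradical $\ng$ cancelling the unipotent drift, stabilizes $\ell$ while lying outside $G_\ell^{0}$; this makes $G_\ell$ disconnected, hence $\Oc_\ell$ not simply connected (the $E(2)$ mechanism, where the orbit is a closed cylinder). If $\dim T\ge 2$ the eigenfrequencies of $S$ are incommensurable, and the coadjoint orbit of an $\ell$ with nonzero components along the eigendirections of $S$ winds densely in a torus and fails to be closed (the Mautner mechanism).

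The main obstacle is this last case, where the interaction of the rotational part $S$ with the unipotent part $N$ and with the abelian complement $\gg/\ng$ must be controlled. A convenient device is reduction to a quotient: if $K\trianglelefteq G$ is a connected closed normal subgroup, then the coadjoint orbits of $G/K$ are precisely the $G$-orbits contained in $\kg^\perp=(\gg/\kg)^*$, with the same closure in $\gg^*$ and, since $K$ is connected, the same fundamental group; it therefore suffices to realize an $E(2)$-type (respectively Mautner-type) model as such a quotient and to exhibit the bad orbit there. The technical heart is thus to verify that the period element genuinely lands in a nontrivial component of the stabilizer in the periodic subcase, and that the winding genuinely fails to close up in the non-periodic subcase.
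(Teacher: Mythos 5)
The forward implication and the non--class-R branch of your contrapositive are fine (flowing a covector in the stable subspace of $\ad^*_{X_0}$ to $0$ does produce a non-closed orbit). The gap is in the class-R, non-nilpotent case, which is where all the work lies, and one of your two mechanisms is wrong as stated. In the ``$\dim T\ge 2$, incommensurable frequencies'' subcase you conclude that the coadjoint orbit ``winds densely in a torus and fails to be closed,'' but dense winding is a property of the orbit of the \emph{one-parameter subgroup} $\exp(\RR X_0)$, not of the full coadjoint orbit, and the rest of the group may close the torus up. Concretely, take $G=\CC^2\rtimes\RR^2$ with $(s,t)\cdot(z,w)=(\ee^{\ie s}z,\ee^{\ie t}w)$: this is class R, solvable, $1$-connected and not nilpotent, and for $X_0=\partial_s+\alpha\partial_t$ with $\alpha$ irrational the semisimple part of $\ad_{X_0}$ generates a $2$-torus with incommensurable frequencies; yet every coadjoint orbit of $G$ is closed (the generic ones are $T^2\times\RR^2$). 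Here the correct conclusion is failure of simple connectivity, not of closedness, so your dichotomy ``periodic $\Rightarrow$ not simply connected, non-periodic $\Rightarrow$ not closed'' does not partition the problem correctly: in the non-periodic case you must still decide whether the remaining generators of $G$ fill in the torus, and if they do you are thrown back onto the $\pi_1$ argument. In the periodic subcase, the step you defer --- choosing $\ell$ so that the period element, corrected in the nilradical, stabilizes $\ell$ while lying outside $G_\ell^0$ --- is precisely the technical heart, and it is not clear it can be done by inspecting a single $\ad_{X_0}$: one needs to control simultaneously the unipotent drift, the action on $[\gg,\gg]$, and the possibility that $\exp(s_0X_0)$ unwinds inside $\langle\exp\gg(\ell)\rangle$. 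Likewise, the claim that one can always ``realize an $E(2)$-type or Mautner-type model as a quotient'' of an arbitrary non-nilpotent class-R solvable group is asserted, not proved; producing such a quotient is essentially the whole theorem.

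For comparison, the paper avoids choosing a single witness $X_0$ altogether: it inducts on $\dim G$, quotienting by $[D,D]$ (with $D=[G,G]$) and invoking P.~Hall's nilpotency criterion, or by the centre, until it reaches a centreless metabelian group; a splitting result via Cartan subalgebras (Proposition~\ref{P3}) then writes $G$ as abelian $\rtimes$ abelian, and Lemma~\ref{two1} runs a weight-space analysis in which any nontrivial rotation is unitarized so that the relevant orbit becomes a compact torus --- handling your periodic and incommensurable subcases uniformly and yielding the contradiction with simple connectivity. If you want to salvage your approach, you would need to (i) replace the non-closedness claim in the incommensurable subcase by a case split of its own, and (ii) supply a genuine construction of the covector $\ell$ with disconnected stabilizer; at that point you will likely find yourself reproving the paper's reduction to the metabelian case.
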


Thus the main $C^*$-rigidity result of our paper is the following theorem.

\begin{theorem}\label{main-main_thm}
Let $G$ be a  1-connected Lie group such that 
 $C^*(G) \simeq C^*(H_{n})$ for some $n\ge 1$.
 Then 
 $G\simeq H_{n}$.
\end{theorem}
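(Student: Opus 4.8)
The plan is to establish the three implications displayed in the introduction, extracting from the isomorphism $C^*(G)\simeq C^*(H_n)$ exactly enough structure to invoke Proposition~\ref{solv}, then Theorem~\ref{IRS}, and finally the exponential rigidity result of \cite{BB-JTA}.

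Steps~1 and~3 I would treat as bookkeeping around results already available. By Proposition~\ref{solv}, the hypothesis $C^*(G)\simeq C^*(H_n)$ already places $G$ in $\mathcal{SG}$, so $G$ is a $1$-connected solvable Lie group; this is Step~1. For Step~3, once $G$ has been shown to be nilpotent it is automatically exponential, so $G\in\mathcal{EG}$, and feeding $C^*(G)\simeq C^*(H_n)$ into the main result of \cite{BB-JTA} — the $C^*$-rigidity of $H_n$ within $\mathcal{EG}$ — yields $G\simeq H_n$. Everything thus reduces to Step~2: \emph{assuming $G\in\mathcal{SG}$ and $C^*(G)\simeq C^*(H_n)$, show that $G$ is nilpotent.}

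For Step~2 my strategy is to verify the hypotheses of Theorem~\ref{IRS}, namely that every coadjoint orbit of $G$ is a closed and simply connected subset of $\gg^*$. Since $C^*(H_n)$ is CCR, hence type~I, and type~I is a $*$-isomorphism invariant, $G$ is a type~I solvable group; the Auslander--Kostant orbit picture therefore applies, and $\Prim C^*(G)\cong\widehat{G}$ is governed by the coadjoint orbit space $\gg^*/\Ad^*(G)$. The closedness of the orbits I would read off from a separation property: all coadjoint orbits of $H_n$ are closed, so every point of $\Prim C^*(H_n)$ is closed, i.e.\ this space is $T_1$; transporting the $T_1$ property across the $*$-isomorphism to $\Prim C^*(G)$ and translating back through the orbit correspondence forces every coadjoint orbit of $G$ to be closed in $\gg^*$.

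The simple connectivity of the orbits is the crux and the main obstacle. The mechanism I would exploit is that, for a type~I solvable group, a non-simply-connected orbit $\Oc$ supports not a single irreducible representation but an entire family indexed by the unitary characters of its fundamental group, so such an orbit contributes a transverse copy of $\widehat{\pi_1(\Oc)}$ to $\widehat{G}$ sitting over the single point $[\Oc]$ of the orbit space. A nontrivial $\pi_1(\Oc)$ would thereby embed a compact factor — a circle or a torus — into the local structure of $\Prim C^*(G)$, whereas the generic stratum of $\Prim C^*(H_n)$ is modelled on $\RR^{\times}$ and carries exactly one representation over each $2n$-dimensional planar orbit, with no such compact directions. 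Matching the local topology of $\widehat{G}\cong\widehat{H_n}$ stratum by stratum should then force $\pi_1(\Oc)=0$ for every orbit. The delicate part will be to make this matching rigorous: to recover $\pi_1(\Oc)$ intrinsically from the local structure of the primitive ideal space and to exclude the possibility that degenerations between strata conspire to hide the extra compact factors. Once simple connectivity is secured, Theorem~\ref{IRS} yields that $G$ is nilpotent, and the three steps chain together to give $G\simeq H_n$.
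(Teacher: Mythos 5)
Your three-step outline coincides with the paper's: Proposition~\ref{solv} for solvability, closedness plus simple connectivity of coadjoint orbits feeding into Theorem~\ref{IRS} for nilpotency, and the exponential rigidity result of \cite{BB-JTA} to finish. Steps 1 and 3 and the closedness argument are essentially right: the paper phrases closedness via the CCR property, which is a $*$-isomorphism invariant, combined with Puk\'anszky's theorem that for a class~R, type~I solvable group CCR is equivalent to all coadjoint orbits being closed (Lemma~\ref{closed}); your $T_1$ formulation amounts to the same thing.

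The genuine gap is exactly the point you flag and leave open: you never actually recover $\pi_1(\Oc)$ from the primitive ideal space, and your worry about ``degenerations between strata'' suggests a global stratification argument that would be hard to control. The paper avoids any inter-stratum analysis by working one orbit at a time through Puk\'anszky's correspondence. For a type~I solvable $G$ and a locally closed orbit $\Oc$, the set of classes in $\widehat{G}$ attached to $\Oc$ is $\Bun(\Oc)/G$, which Lemma~\ref{trivial} identifies with the torus $\widetilde{\Pi}\simeq\widehat{\pi_1(\Oc)}\simeq\TT^{\rk(\Oc)}$, and the restriction $\kappa\vert_{\Bun(\Oc)/G}\colon\Bun(\Oc)/G\to\widehat{G}$ is continuous and injective (Remark~\ref{Pi-remark}). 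If $\xi\notin[\gg,\gg]^\perp$, the image lies in $S_\infty\simeq\RR^\times$, since characters correspond to orbits inside $[\gg,\gg]^\perp$; a continuous injection of a compact connected torus into $\RR^\times$ must have one-point image, hence $\rk(\Oc)=0$ and $\pi_1(\Oc)$ is trivial. Orbits contained in $[\gg,\gg]^\perp$ are singletons, so trivially simply connected; connectedness of all stabilizers $G(\xi)$ together with simple connectedness of $G$ then gives simple connectivity of every orbit (Lemma~\ref{4}). With this local, orbitwise argument supplied in place of your ``stratum by stratum matching,'' your proof closes and agrees with the paper's.
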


The structure of our paper is as follows: 
Section~\ref{splitting} develops some algebraic tools that are needed 
for proving Theorem~\ref{IRS} in Section~\ref{proofIRS}. 
Section~\ref{proofMain} is devoted to proving Theorem~\ref{main-main_thm}. 

Throughout this paper, `1-connected' means connected and simply connected. 
Every 1-connected Lie group is denoted by an upper case Roman letter, 
and its Lie algebra is denoted by the corresponding lower case Gothic letter. 
For any Lie algebra $\gg$ with its linear dual space $\gg^*$ we denote by $\langle\cdot,\cdot\rangle\colon\gg^*\times\gg\to\RR$ the corresponding duality pairing. 
We often denote the group actions simply by juxtaposition, and in particular this is the case for the coadjoint action $G\times\gg^*\to\gg^*$, $(g,\xi)\mapsto g\xi$.

\section{Splitting of centreless metabelian Lie algebras}
\label{splitting}

For every finite-dimensional real Lie algebra $\gg$ with its descending central series  $\gg=\gg^1\supseteq\gg^2\supseteq\cdots$ we denote $\gg^\infty:=\bigcap\limits_{j\ge 1}\gg^j$. 
Then $\gg^\infty$ is a characteristic ideal of~$\gg$. 
We recall that a Cartan subalgebra of $\gg$ is a nilpotent subalgebra 
$\hg\subseteq \gg$ which is not an ideal in any strictly larger subalgebra of $\gg$, that is, $\hg= \{ x\in \gg : [x, \hg]\subseteq \hg \}$.

\begin{lemma}
	\label{L1}
	Let $\gg$ be a real Lie algebra with $n:=\dim\gg<\infty$ 
	and, for an arbitrary Cartan subalgebra $\hg\subseteq\gg$, we define the linear subspace $\gg_\hg:=\sum\limits_{h\in\hg}(\ad_\gg h)^n\gg\subseteq\gg$. 
	If the ideal $\gg^\infty$ is abelian, then the Lie algebra $\gg$ is solvable and we have  $\gg^\infty=\gg_\hg$ and  
	$\gg=\gg^\infty\rtimes\hg$. 
\end{lemma}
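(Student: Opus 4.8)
The plan is to build everything on the Fitting (generalized weight space) decomposition of $\gg$ relative to the nilpotent Lie algebra $\hg$ acting via $\ad$, and then to locate $\gg^\infty$ precisely inside that decomposition. First I would record two facts that hold without the abelian hypothesis. Since the descending central series of a finite-dimensional Lie algebra stabilizes, one has $\gg^\infty=\gg^N=\gg^{N+1}=[\gg,\gg^\infty]$ for large $N$; and the quotient $\gg/\gg^\infty$ is nilpotent, because its own $\infty$-term is $\bigcap_j\gg^j/\gg^\infty=0$. Thus $\gg$ is an extension of the nilpotent algebra $\gg/\gg^\infty$ by the ideal $\gg^\infty$, so once $\gg^\infty$ is assumed abelian both ends are solvable and hence $\gg$ is solvable, which settles the first assertion.

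Next I would invoke the structure theory of Cartan subalgebras: since $\hg$ is a Cartan subalgebra it coincides with the null component $\gg_0(\hg)$ of the action of $\ad\hg$, and the Fitting decomposition yields the $\ad\hg$-invariant splitting $\gg=\gg_0(\hg)\oplus\gg_\hg=\hg\oplus\gg_\hg$, where $\gg_\hg=\sum_{h\in\hg}(\ad h)^n\gg$ is exactly the complementary (sum of nonzero weight) component; in particular $[\hg,\gg_\hg]\subseteq\gg_\hg$. Because $\gg/\gg^\infty$ is nilpotent of dimension $\le n$, each operator $\ad(h+\gg^\infty)$ is nilpotent on it, so $(\ad h)^n\gg\subseteq\gg^\infty$ for every $h$, giving $\gg_\hg\subseteq\gg^\infty$. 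Since $\gg^\infty$ is a characteristic ideal it is $\ad\hg$-invariant and therefore respects the Fitting decomposition; combined with $\gg_\hg\subseteq\gg^\infty$ this gives $\gg^\infty=\zg\oplus\gg_\hg$ with $\zg:=\gg^\infty\cap\hg$. The whole lemma then reduces to proving $\zg=0$.

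This last step is the crux, and the only place where the abelian hypothesis is essential. I would feed the decompositions $\gg=\hg\oplus\gg_\hg$ and $\gg^\infty=\zg\oplus\gg_\hg$ into the identity $\gg^\infty=[\gg,\gg^\infty]$. Expanding the bracket and using that $\gg^\infty$ is abelian kills the terms $[\gg_\hg,\zg]$ and $[\gg_\hg,\gg_\hg]$, while $\zg\subseteq\hg$ gives $[\hg,\zg]\subseteq\hg\cap\gg^\infty=\zg$ and $[\hg,\gg_\hg]\subseteq\gg_\hg$. Comparing the $\zg$-components of $\gg^\infty=[\gg,\gg^\infty]$ then forces $\zg=[\hg,\zg]$. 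But $\zg$ is an ideal of the nilpotent algebra $\hg$ on which $\hg$ acts nilpotently, so by an Engel-type argument $[\hg,\zg]=\zg$ is impossible unless $\zg=0$. Hence $\gg^\infty\cap\hg=0$, so $\gg^\infty=\gg_\hg$ and $\gg=\hg\oplus\gg^\infty=\gg^\infty\rtimes\hg$ with ideal $\gg^\infty$. I expect the genuine obstacle to be precisely this extraction of $\zg=[\hg,\zg]$ and its vanishing: everything before it is standard Cartan/Fitting theory, whereas here one must combine the abelianness of $\gg^\infty$, the self-normalizing nature of $\hg$ encoded in the splitting, and the nilpotency of $\hg$ in just the right way.
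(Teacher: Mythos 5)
Your argument is correct and follows essentially the same route as the paper: the paper's proof consists precisely of the Fitting decomposition $\gg=\gg_\hg\dotplus\hg$, the inclusion $\gg_\hg\subseteq\gg^\infty$, and the equality $\gg_\hg=\gg^\infty$ under the abelian hypothesis, the last of which it simply cites from Seligman's book. You supply a self-contained proof of that cited equality (via $\zg:=\gg^\infty\cap\hg=[\hg,\zg]$ and nilpotency of $\hg$), and you derive solvability from the extension $0\to\gg^\infty\to\gg\to\gg/\gg^\infty\to 0$ rather than from the semidirect product, but these are only cosmetic differences.
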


\begin{proof}
	We have the direct sum decomposition $\gg=\gg_\hg\dotplus\hg$ and the inclusion  $\gg_\hg\subseteq\gg^\infty$ since $\hg$ is a Cartan subalgebra of $\gg$. 
	Moreover $\gg_\hg=\gg^\infty$ since the ideal $\gg^\infty$ is abelian, cf.,  
	\cite[Ch. V, \S 7, page 117]{Se67}. 
	Finally, the fact that the Lie algebra $\gg$ is solvable follows by the semidirect product decomposition $\gg=\gg^\infty\rtimes\hg$ in which both $\gg^\infty$ and $\hg$ are solvable.
\end{proof}

The following lemma is inspired by the proof of a result on finite groups, 
namely 
\cite[9.2.7]{Ro96}. 

\begin{lemma}
	\label{L2}
	If $\gg$ is a finite-dimensional real Lie algebra with trivial centre 
	and $\gg^\infty$ is abelian, then $\gg^\infty$ is a maximal abelian subalgebra of $\gg$. 
\end{lemma}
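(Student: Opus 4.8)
The plan is to rephrase maximality as a self-centralizing property and then to combine the semidirect decomposition of Lemma~\ref{L1} with the nilpotency of a Cartan subalgebra. Since $\gg^\infty$ is abelian by hypothesis, $\gg^\infty$ is a maximal abelian subalgebra if and only if it equals its own centralizer $\cg:=\{x\in\gg:[x,\gg^\infty]=0\}$: if $\cg=\gg^\infty$ and $\mathfrak b\supseteq\gg^\infty$ is abelian, then $[\mathfrak b,\gg^\infty]=0$, so $\mathfrak b\subseteq\cg=\gg^\infty$. As the inclusion $\gg^\infty\subseteq\cg$ always holds, everything reduces to proving $\cg\subseteq\gg^\infty$. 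To set this up I fix a Cartan subalgebra $\hg\subseteq\gg$ and invoke Lemma~\ref{L1} (applicable because $\gg^\infty$ is abelian), which gives that $\gg$ is solvable and furnishes the decomposition $\gg=\gg^\infty\dotplus\hg$. The centralizer $\cg$ of the ideal $\gg^\infty$ is itself an ideal, and since $\gg^\infty\subseteq\cg$ the modular law yields $\cg=\gg^\infty\dotplus\kg$ with $\kg:=\cg\cap\hg$. Thus $\cg\subseteq\gg^\infty$ is equivalent to $\kg=0$.

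The heart of the argument is to deduce $\kg=0$ from the triviality of the centre $\zg$ of $\gg$. First I observe that $\kg$ is itself an ideal of $\gg$: indeed $[\gg,\kg]=[\gg^\infty,\kg]+[\hg,\kg]=[\hg,\kg]\subseteq\hg$, because $\kg$ centralizes $\gg^\infty$, while on the other hand $[\gg,\kg]\subseteq[\gg,\cg]\subseteq\cg$; hence $[\gg,\kg]\subseteq\cg\cap\hg=\kg$. Now I run the descending chain $\kg_1:=\kg$, $\kg_{i+1}:=[\gg,\kg_i]$. Since $\kg$ is an ideal contained in $\hg$ and centralizing $\gg^\infty$, an easy induction gives $\kg_i\subseteq\kg\cap\hg^i$: the bracket with $\gg$ collapses to a bracket with $\hg$ (the $\gg^\infty$-part acting trivially), so $\kg_{i+1}=[\hg,\kg_i]\subseteq[\hg,\hg^i]=\hg^{i+1}$. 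Because $\hg$ is nilpotent, $\hg^i=0$ for large $i$, so the chain terminates. If $\kg\ne 0$, the last nonzero term $\kg_m$ satisfies $[\gg,\kg_m]=\kg_{m+1}=0$, i.e. $\kg_m$ is a nonzero central ideal, contradicting $\zg=0$. Therefore $\kg=0$, which finishes the proof.

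The step I expect to be the main obstacle is precisely this last one, and it is where the role of the Cartan subalgebra is essential. One cannot argue merely that $\cg$ is a nonzero nilpotent ideal and must therefore meet the centre, since that implication fails in general: for the two-dimensional non-abelian Lie algebra the nilradical avoids the trivial centre. What rescues the argument is that $\kg$ lies \emph{inside} $\hg$ and centralizes $\gg^\infty$, so that the $\ad\gg$-action on $\kg$ is governed by the nilpotent $\ad\hg$-action; this forces the descending central series to reach $0$ and thereby produces a genuine central element. This is the Lie-algebra counterpart of the finite-group mechanism behind \cite[9.2.7]{Ro96}.
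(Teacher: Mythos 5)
Your proof is correct. It follows the same overall strategy as the paper's---reduce maximality to showing that the centralizer $\cg$ of $\gg^\infty$ satisfies $\cg\subseteq\gg^\infty$, split $\gg$ as $\gg^\infty\dotplus\hg$ via Lemma~\ref{L1}, and derive a contradiction with the triviality of the centre---but the execution differs in both halves, and in a way that streamlines the argument. First, the paper does not apply the modular law to $\hg$ directly; instead it forms $\tg:=\cg+\hg$, applies Lemma~\ref{L1} a second time to $\tg$ to produce a possibly different nilpotent complement $\kg$ with $\kg+\gg^\infty=\gg$, and only then decomposes $\cg=(\cg\cap\gg^\infty)+(\cg\cap\kg)$. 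Your observation that $\gg^\infty\subseteq\cg$ makes the Dedekind identity $\cg=\gg^\infty\dotplus(\cg\cap\hg)$ available at once renders that detour unnecessary. Second, to kill $\cg\cap\hg$ the paper argues inside $\hg$: it shows that $\zg(\hg)\cap\cg$ is central in $\gg$, hence zero, and then invokes the fact that a nonzero ideal of a nilpotent Lie algebra meets its centre nontrivially (\cite[Cor. 1.1.15]{CG90}). You instead show that $\kg=\cg\cap\hg$ is an ideal of $\gg$ whose $\ad\gg$-descending series is dominated by the descending central series of $\hg$, so its last nonzero term would be a central ideal of $\gg$ itself; this is self-contained and avoids the external citation. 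The paper's version stays closer to the finite-group template of \cite[9.2.7]{Ro96}, which is presumably why it retains the extra step; your version buys a shorter proof at the cost of the explicit verification that $\kg$ is an ideal of $\gg$, which you carry out correctly. Your closing caveat---that one cannot simply claim a nonzero nilpotent ideal must meet the centre, as the two-dimensional non-abelian algebra shows---correctly identifies why the containment $\kg\subseteq\hg$ and the vanishing of $[\gg^\infty,\kg]$ are both essential.
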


\begin{proof}
	Let us denote $\cg:=\{x\in\gg : [x,\gg^\infty]=\{0\}\}$. 
	Since $\gg^\infty$ is an ideal in $\gg$, it follows that $\cg$ is an ideal in $\gg$, too. 
	In order to prove the assertion, it suffices to show that $\cg\subseteq\gg^\infty$. 
	The proof of this inclusion relation proceeds in two steps: 
	
	Step 1: 
	We claim that for every nilpotent subalgebra $\hg\subseteq \gg$ with $\hg+\gg^\infty=\gg$ we have $\hg\cap\cg=\{0\}$. 
	
	Let $\zg$ denote the centre of $\hg$, hence $[\zg\cap\cg,\hg]\subseteq[\zg,\hg]=\{0\}$. 
	On the other hand, the definition of $\cg$ implies $[\cg,\gg^\infty]=\{0\}$, hence   $[\zg\cap\cg,\gg^\infty]=\{0\}$. 
	Consequently $[\zg\cap\cg,\gg]=[\zg\cap\cg,\hg+\gg^\infty]=\{0\}$. 
	Thus $\zg\cap\cg$ is contained in the centre of $\gg$, which is trivial by hypothesis, hence $\zg\cap\cg=\{0\}$ or, equivalently, $\zg\cap(\hg\cap\cg)=\{0\}$. 
	
	However, since $\cg\trianglelefteq\gg$, we have $\hg\cap\cg\trianglelefteq\hg$. 
	Therefore, since $\hg$ is a nilpotent Lie algebra and $\zg\cap(\hg\cap\cg)=\{0\}$, 
	we must have $\hg\cap\cg=\{0\}$. 
	(See \cite[Cor. 1.1.15]{CG90}.) 
	
	Step 2: 
	We claim that there exists a nilpotent subalgebra $\kg\subseteq\gg$ with $\kg+\gg^\infty=\gg$ and $\cg=(\cg\cap\kg)+(\cg\cap\gg^\infty)$. 
	If this is the case, then $\cg\cap\kg=\{0\}$ by Step 1, hence $\cg=\cg\cap\gg^\infty$, 
	that is, $\cg\subseteq\gg^\infty$, which completes the proof, by the remarks at the beginning of the proof. 
	
	To prove the above claim, we first note that, since $\gg^\infty$ is abelian, 
	we may use Lemma~\ref{L1} to find a nilpotent subalgebra $\hg\subseteq\gg$ with $\gg=\gg^\infty+\hg$. 
	Denoting $\tg:=\cg+\hg$, we have a Lie algebra isomorphism $\tg/\cg\simeq \hg/(\cg\cap\hg)$. 
	Since $\hg$ is nilpotent, we obtain 
	$\tg^\infty:=\bigcap\limits_{j\ge 1}\tg^j\subseteq\cg$. 
	Since $\tg\subseteq\gg$, we actually have 
	\begin{equation}
		\label{L1_proof_eq1}
		\tg^\infty\subseteq \cg\cap\gg^\infty\quad (\subseteq\cg\subseteq\tg).
	\end{equation} 
	In particular $\tg^\infty$ is abelian hence, by Lemma~\ref{L1} applied for the Lie algebra $\tg$, we obtain a nilpotent subalgebra $\kg\subseteq\tg$ with $$\tg=\tg^\infty+\kg\mathop{=}\limits^{\eqref{L1_proof_eq1}}(\cg\cap\gg^\infty)+\kg.$$ 
	Since $\cg\subseteq\tg$, we then obtain $\cg=(\cg\cap\gg^\infty)+(\cg\cap\kg)$ 
	(which is the second equality in the above claim). 
	Furthermore 
	$$\gg^\infty+\kg=\gg^\infty+(\cg\cap\gg^\infty)+\kg
	=\gg^\infty+\tg\supseteq\gg^\infty+\hg=\gg $$
	which implies $\gg^\infty+\kg=\gg$, and the above claim is completely proved. 
\end{proof}

We recall that a Lie algebra $\gg$ is called metabelian if $[\gg, \gg]$ is abelian.

\begin{proposition}
	\label{P3}
	Let $\gg$ be a finite-dimensional real Lie algebra with trivial centre.
	Then $\gg$ is metabelian if and only if it has an abelian ideal $\ng$ and an abelian subalgebra~$\ag$ satisfying $\gg=\ng\dotplus\ag$. 
	If this is the case, then one may choose $\ng:=\gg^\infty$ 
		and~$\ag$ can be any Cartan subalgebra of~$\gg$.
\end{proposition}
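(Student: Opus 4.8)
The plan is to treat the two implications separately, with the refinement (the explicit choice $\ng=\gg^\infty$ and $\ag$ a Cartan subalgebra) emerging automatically from the forward direction.

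For the easy implication, I would suppose that $\gg=\ng\dotplus\ag$ with $\ng$ an abelian ideal and $\ag$ an abelian subalgebra, and simply expand the derived algebra over this decomposition: $[\gg,\gg]=[\ng,\ng]+[\ng,\ag]+[\ag,\ag]=[\ng,\ag]\subseteq\ng$, since $\ng$ and $\ag$ are abelian and $\ng$ is an ideal. As $[\gg,\gg]$ is then contained in the abelian algebra $\ng$, it is itself abelian, so $\gg$ is metabelian. Note that this direction uses neither the finiteness of the dimension nor the triviality of the centre.

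For the main implication, I would assume $\gg$ metabelian and first observe that $\gg^\infty\subseteq\gg^2=[\gg,\gg]$, so $\gg^\infty$ is a subalgebra of the abelian derived algebra and is therefore abelian. This places me squarely in the hypotheses of Lemmas~\ref{L1} and~\ref{L2}. By Lemma~\ref{L1}, $\gg$ is solvable and $\gg=\gg^\infty\rtimes\hg$ for any Cartan subalgebra $\hg$; by Lemma~\ref{L2}, where the trivial-centre assumption is essential, $\gg^\infty$ is a maximal abelian subalgebra of $\gg$. Combining the inclusion $\gg^\infty\subseteq[\gg,\gg]$ with the maximality of $\gg^\infty$ and the abelianness of $[\gg,\gg]$ forces the identity $\gg^\infty=[\gg,\gg]$. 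The decisive step is then to upgrade the Cartan complement $\hg$ from nilpotent to abelian: since $[\hg,\hg]\subseteq\hg$ and simultaneously $[\hg,\hg]\subseteq[\gg,\gg]=\gg^\infty$, the directness of the sum $\gg=\gg^\infty\dotplus\hg$ gives $[\hg,\hg]\subseteq\gg^\infty\cap\hg=\{0\}$. Taking $\ng:=\gg^\infty$ and $\ag:=\hg$ then yields the asserted decomposition and, at the same time, the refinement.

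I expect the only genuine obstacle to be the identification $\gg^\infty=[\gg,\gg]$, which is precisely where the maximal-abelian conclusion of Lemma~\ref{L2}, and hence the trivial-centre hypothesis, is brought to bear; once this identity is available, the abelianness of the Cartan subalgebra is immediate from the directness of the decomposition furnished by Lemma~\ref{L1}, and no further computation is needed.
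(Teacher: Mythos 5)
Your proposal is correct and follows essentially the same route as the paper's proof: the easy direction by expanding $[\gg,\gg]$ over the decomposition, and the converse by combining Lemma~\ref{L1} (the splitting $\gg=\gg^\infty\dotplus\hg$) with Lemma~\ref{L2} (maximal abelianness of $\gg^\infty$) to get $\gg^\infty=[\gg,\gg]$ and then $[\hg,\hg]\subseteq\hg\cap\gg^\infty=\{0\}$. No gaps.
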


\begin{proof}
	If $\gg=\ng\dotplus\ag$ as in the statement, then $\gg^1:=[\gg,\gg]=[\ng,\ag]\subseteq\ng$, 
	hence $[\gg^1,\gg^1]\subseteq[\ng,\ng]=\{0\}$. 
	
	Conversely, let us assume that $\gg$ is metabelian. 
	Then $\gg^1$ is abelian and, on the other hand, $\gg^\infty\subseteq\gg^1$ and $\gg^\infty$ is a maximal abelian subalgebra of $\gg$ by Lemma~\ref{L2}. 
	Therefore $\gg^\infty=\gg^1$. 
	For every Cartan subalgebra $\hg\subseteq\gg$ we have $\gg=\gg^\infty\dotplus\hg$ by Lemma~\ref{L1}. 
	This implies in particular 
	\begin{equation}
		\label{P3_proof_eq1}
		\gg^1=[\gg,\gg]=[\hg,\hg]+[\hg,\gg^\infty].
	\end{equation}
	Here we have $[\hg,\hg]\subseteq\hg$, $[\hg,\gg^\infty]\subseteq\gg^\infty$, and $\gg^1=\gg^\infty$.
	On the other hand, by the direct sum decomposition  $\gg=\gg^\infty\dotplus\hg$,  we have in particular $\hg\cap\gg^\infty=\{0\}$. 
	Therefore \eqref{P3_proof_eq1} implies $[\hg,\hg]=\{0\}$. 
	Thus the assertion holds with $\ng:=\gg^\infty$ and $\ag:=\hg$. 
\end{proof}

\section{Characterizations of nilpotent Lie groups: proof of Theorem~\ref{IRS}}
\label{proofIRS}

In this section we prove Theorem~\ref{IRS}.
This requires several auxiliary results.

We start by stating two classical results 
to be referred to later on.
We recall from \cite[page 170]{AuMo66} that a 1-connected solvable Lie group $G$ is said to be class~R (or type~R) if for every $g\in G$ its corresponding adjoint action $\Ad_G(g)\colon\gg\to\gg$ is a linear map that has all eigenvalues of modulus one.

\begin{lemma}
	\label{closed}
	If $G$ is a 1-connected solvable Lie group whose coadjoint orbits are simply connected, then the following conditions are equivalent: 
	\begin{enumerate}[{\rm(i)}]
		\item\label{closed_item1} 
		The group $G$ is class~R and type~\I. 
		\item\label{closed_item2}  
		Every coadjoint orbit of $G$ is closed in $\gg^*$. 
		\item\label{closed_item3} 
		 The group $G$ is CCR (liminary). 
	\end{enumerate}
\end{lemma}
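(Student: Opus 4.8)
The plan is to derive the three equivalences from three classical inputs: Dixmier's criterion that a postliminary (type~I) $C^*$-algebra is liminary precisely when its spectrum is a $T_1$ space; the Auslander--Kostant orbit method for type~I solvable Lie groups, which under our standing hypothesis identifies the unitary dual $\widehat{G}$ with the space of coadjoint orbits equipped with its quotient topology; and the structure theory of class~R solvable groups from \cite{AuMo66}. I would organize the argument as the two equivalences (ii)~$\Leftrightarrow$~(iii) and (i)~$\Leftrightarrow$~(ii).

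For (ii)~$\Leftrightarrow$~(iii): a closed orbit is in particular locally closed, so by the orbit method $G$ is type~I as soon as all orbits are closed; here the hypothesis that the orbits are simply connected is exactly what is needed to force the integrality (Mackey) obstruction attached to each orbit to vanish, so that local closedness alone already yields type~I and the stabilizer character is pinned down by the orbit. Once $G$ is type~I, the orbit parametrization makes the Fell topology on $\widehat{G}$ correspond to the quotient topology on the orbit space, so that $\widehat{G}$ is $T_1$ if and only if every coadjoint orbit is closed in $\gg^*$. Dixmier's criterion then turns ``$G$ type~I and $\widehat{G}$ is $T_1$'' into ``$G$ is CCR'', giving (ii)~$\Rightarrow$~(iii); the reverse implication runs backwards along the same chain, since CCR forces both type~I and a $T_1$ spectrum.

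For (i)~$\Leftrightarrow$~(ii): the content is the equivalence, for a $1$-connected solvable group, between closedness of all coadjoint orbits and the class~R condition (the type~I half of (i) being supplied for free by the previous paragraph, as closed orbits are locally closed). If $G$ is class~R, then every $\Ad_G(g)$, and hence the coadjoint action of $g$ on $\gg^*$, has all eigenvalues of modulus one; its Jordan decomposition splits into a relatively compact semisimple part and a unipotent part, so the coadjoint action is distal and its orbits are closed. Conversely, if $G$ is not class~R, some $\Ad_G(g)$ has an eigenvalue off the unit circle, which produces a hyperbolic (expanding/contracting) direction in the coadjoint action; modelled on the $ax+b$ group, this forces a coadjoint orbit whose closure meets a strictly lower-dimensional orbit, so not all orbits are closed. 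Making this last step precise through the root/weight decomposition of $\gg$ under a Cartan subalgebra is the technical core of the lemma.

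The main obstacle is precisely this interface between the coadjoint dynamics and the topology: on one side translating the class~R (distality) condition into closedness of orbits, and on the other translating closedness of orbits into $T_1$-ness of $\widehat{G}$ through the orbit method. Both translations rely essentially on the standing hypothesis of simply connected orbits, which is what permits one to invoke the orbit parametrization without residual cohomological obstructions and to match the two topologies faithfully.
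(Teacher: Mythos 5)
Your proposal takes a genuinely different, and unfortunately flawed, route from the paper's proof, which is purely citation-based: the equivalence \eqref{closed_item2}$\Leftrightarrow$\eqref{closed_item3} is exactly Puk\'anszky's corollary in \cite[Cor., p.~48]{Pu78}, and both implications between \eqref{closed_item1} and \eqref{closed_item3} are Auslander--Moore's theorem \cite[Ch.~V, \S 1, Th.~1]{AuMo66}, to the effect that a type~\I\ solvable Lie group is CCR if and only if it is class~R. Re-deriving these classical facts from the orbit method is legitimate in principle, but your derivation has a genuine gap at its central step.

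The gap is in \eqref{closed_item1}$\Rightarrow$\eqref{closed_item2}. You argue: class~R gives eigenvalues of modulus one, hence the coadjoint action is distal, hence its orbits are closed. The last inference is false: a distal --- even isometric --- action can have dense, non-closed orbits, the irrational linear flow on a torus being the standard example. Exactly this happens for coadjoint orbits of class~R solvable groups: the Mautner group $\CC^2\rtimes\RR$, with $t\cdot(z,w)=(\ee^{\ie t}z,\ee^{\ie\alpha t}w)$ and $\alpha$ irrational, is class~R, yet its generic coadjoint orbits are dense windings of $2$-tori crossed with a line, hence not closed. (This group violates the standing hypothesis of the lemma, since some of its orbits are cylinders, so the lemma itself is safe; but your chain of inferences makes no use of that hypothesis at this step, so the example refutes the reasoning.) The conclusion is that class~R alone cannot yield closedness of orbits: the type~\I\ half of hypothesis \eqref{closed_item1} is indispensable, and your argument never uses it --- you invoke type~\I\ only as an output of \eqref{closed_item2}, which is the wrong direction for this implication. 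The correct route is class~R $+$ type~\I\ $\Rightarrow$ CCR (Auslander--Moore), then CCR $\Rightarrow$ closed orbits (Puk\'anszky). Two further soft spots: the converse direction (failure of class~R forces a non-closed orbit) is only sketched, and you yourself flag it as the unproved ``technical core''; and the assertion that simple connectivity of the orbits kills the integrality (Mackey) obstruction, so that local closedness alone gives type~\I, is not justified --- simple connectivity of $\Oc\simeq G/G(\xi)$ gives connectedness of the stabilizer $G(\xi)$, but not by itself the integrality of~$\xi$.
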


\begin{proof}
``\eqref{closed_item2}$\Leftrightarrow$\eqref{closed_item3} 
This is  \cite[Cor., page 48]{Pu78}.

``\eqref{closed_item3}$\Rightarrow$\eqref{closed_item1} 
Since the group $G$ is CCR, it is  type~\I, 
and  class~R by \cite[Ch. V, \S 1, Th. 1, page 171]{AuMo66}. 

``\eqref{closed_item1}$\Rightarrow$\eqref{closed_item3} 
This follows by \cite[Ch. V, \S 1, Th. 1, page 171]{AuMo66} 
even without the hypothesis that the coadjoint orbits of $G$ are simply connected. 
\end{proof}

\begin{lemma}[P.~Hall]
\label{Hall}
Let $\Gamma$ be an arbitrary group. 
Then $\Gamma$ is nilpotent if and only if it has a normal subgroup $N$ 
such that both groups $N$ and $\Gamma/[N,N]$ are nilpotent. 
\end{lemma}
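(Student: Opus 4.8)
The plan is to dispose of the easy implication first and then concentrate on the substantive converse. For the forward direction, if $\Gamma$ is nilpotent it suffices to take $N=\Gamma$: then $N$ is nilpotent and $\Gamma/[N,N]=\Gamma/[\Gamma,\Gamma]$ is abelian, hence nilpotent (equally, $N=\{1\}$ works). So the whole content lies in proving that the existence of a normal subgroup $N$ with both $N$ and $\Gamma/[N,N]$ nilpotent forces $\Gamma$ to be nilpotent.

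For the converse, I would write $A:=[N,N]$ and let $\gamma_1(\Gamma)=\Gamma$, $\gamma_{i+1}(\Gamma)=[\gamma_i(\Gamma),\Gamma]$ be the lower central series of $\Gamma$, and similarly $\gamma_i(N)$ that of $N$. Let $c$ be the nilpotency class of $N$, so that $\gamma_{c+1}(N)=\{1\}$, and $d$ the nilpotency class of $\Gamma/A$, so that $\gamma_{d+1}(\Gamma)\subseteq A=\gamma_2(N)$. Each $\gamma_i(N)$ is characteristic in $N$ and $N\trianglelefteq\Gamma$, so every $\gamma_i(N)$ is normal in $\Gamma$; this is precisely what permits the clean (normal-subgroup) form of the three subgroups lemma, which will be the engine of the argument, exactly as in the finite-group source \cite[9.2.7]{Ro96}.

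The idea is to interleave the two lower central series by proving the claim
\[ \gamma_{kd+1}(\Gamma)\subseteq\gamma_{k+1}(N)\qquad(1\le k\le c). \]
Granting this, the case $k=c$ gives $\gamma_{cd+1}(\Gamma)\subseteq\gamma_{c+1}(N)=\{1\}$, so $\Gamma$ is nilpotent (of class at most $cd$), which finishes the proof. The base case $k=1$ is exactly the hypothesis $\gamma_{d+1}(\Gamma)\subseteq\gamma_2(N)$. For the inductive step I would invoke the standard identity $\gamma_{kd+1+d}(\Gamma)=[\gamma_{kd+1}(\Gamma),\underbrace{\Gamma,\dots,\Gamma}_{d}]$ together with the inductive hypothesis, reducing the step to the single commutator inclusion
\[ [\gamma_{i}(N),\underbrace{\Gamma,\dots,\Gamma}_{d}]\subseteq\gamma_{i+1}(N)\qquad(i\ge 2). \]

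The main obstacle is this last commutator inclusion. Working modulo $\gamma_{i+1}(N)$, the quotient $V:=\gamma_i(N)/\gamma_{i+1}(N)$ is central in $N/\gamma_{i+1}(N)$, so $[V,N]=1$; combined with $\gamma_{d+1}(\Gamma)\subseteq A=[N,N]$ this already yields $[V,\gamma_{d+1}(\Gamma)]\subseteq[V,N]=1$. The delicate point is to propagate this single vanishing commutator back along the lower central series of $\Gamma$ so as to conclude $[V,\underbrace{\Gamma,\dots,\Gamma}_{d}]=1$, which is carried out by repeated application of the three subgroups lemma. What makes this genuinely subtle, and where the hypothesis must be used at the level of $[N,N]$ rather than merely $N$, is that the analogous statement with $A$ replaced by $N$ is false (the infinite dihedral group, with $N$ its abelian translation subgroup, already violates it); thus the commutator bookkeeping must track the interaction of the two filtrations and cannot be reduced to an abstract ``nilpotent group acting on an abelian group'' assertion. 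Everything outside this commutator lemma is formal.
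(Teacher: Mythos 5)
The paper offers no argument of its own here (it simply cites \cite[5.2.10]{Ro96}), so the only question is whether your proof stands on its own. It does not: the entire weight of your argument rests on the inclusion
$[\gamma_{i}(N),\underbrace{\Gamma,\dots,\Gamma}_{d}]\subseteq\gamma_{i+1}(N)$ for $i\ge 2$, which you assert is ``carried out by repeated application of the three subgroups lemma'' without carrying it out — and in fact it is false. Note that it would give $\Gamma$ nilpotency class at most $cd$, whereas Hall's argument only yields the quadratic bound $\binom{c+1}{2}d-\binom{c}{2}$; the true mechanism is that descending one step in the lower central series of $N$ costs roughly $id$ (not $d$) further steps in that of $\Gamma$, which is exactly what a correct three-subgroups bookkeeping produces. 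Your claim that everything outside this commutator lemma is formal is right, but the lemma itself is the theorem, and it fails already for $c=d=2$.

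Here is a counterexample (stated for Lie algebras; exponentiating to the corresponding $1$-connected nilpotent Lie groups, or reducing a $\mathbb{Z}$-form mod $p$, gives a group counterexample). Let $\mathfrak{n}=V\oplus\Lambda^2V$ be the free nilpotent Lie algebra of class $2$ on $V=\mathrm{span}\{u,u_1,v,v_1,w,w_1\}$, and let $A$ be the derivation of $\mathfrak{n}$ determined by $Au=u_1$, $Au_1=[v,w]$, $Av=v_1$, $Aw=w_1$, $Av_1=Aw_1=0$. Put $\mathfrak{g}=\mathfrak{n}\rtimes_A\mathbb{R}a$ and $N:=\mathfrak{n}$. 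Then $A^2$ maps each generator into $\mathfrak{n}'=[\mathfrak{n},\mathfrak{n}]$, and since $A\mathfrak{n}'\subseteq\mathfrak{n}'$ and $[\mathfrak{n}',\mathfrak{n}]=0$, one checks
$\gamma_3(\mathfrak{g})=A^2\mathfrak{n}+A\mathfrak{n}'+[A\mathfrak{n},\mathfrak{n}]\subseteq\mathfrak{n}'$,
so $\mathfrak{n}$ has class $c=2$ and $\mathfrak{g}/\mathfrak{n}'$ has class $d=2$. Nevertheless
$$A^4u=A^2\bigl([v,w]\bigr)=2[v_1,w_1]\neq 0,$$
so $\gamma_5(\mathfrak{g})\ni(\mathrm{ad}\,a)^4u\neq0$ and $\mathfrak{g}$ has class $5>4=cd$. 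The same computation refutes your inductive step directly: $[v,w]\in\gamma_2(\mathfrak{n})$ but $[[v,w],a,a]=2[v_1,w_1]\notin\gamma_3(\mathfrak{n})=\{0\}$, i.e.\ $[\gamma_2(N),\Gamma,\Gamma]\not\subseteq\gamma_3(N)$. So your forward implication and the reduction to a commutator inclusion are fine, but the inclusion you need is wrong, and the proof must instead follow Hall's genuine double induction (as in \cite[5.2.10]{Ro96}), accepting the weaker, quadratic class bound.
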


\begin{proof}
See for instance \cite[5.2.10]{Ro96}.
\end{proof}

\begin{remark}
\normalfont 
The following version of Lemma~\ref{Hall} for Lie algebras can be found in \cite[Th. 2]{Ch68}:
If $\gg$ is a finite-dimensional real Lie algebra, then $\gg$ is nilpotent if and only if it has an ideal $\ng\subseteq\gg$ such that both $\ng$ and the quotient $\gg/[\ng,\ng]$ are nilpotent Lie algebras.
\end{remark}

\begin{lemma}
\label{quot_orbits}
Let $H$ be a $1$-connected Lie group and assume that $\ng\subseteq \hg$ is an ideal of the Lie algebra $\hg$ of $H$. 
Denote by $N:=\langle\exp_H(\ng)\rangle$ the analytic subgroup of $H$ corresponding to $\ng$.
Then $N$ is a $1$-connected closed normal subgroup of $H$, and every coadjoint orbit of the quotient group $H/N$ is homeomorphic to a certain coadjoint orbit of $H$. 
Moreover, if the coadjoint orbits of $H$ are closed in $\hg^*$, then the coadjoint orbits of $H/N$ are closed in $(\hg/ \ng)^*$. 
\end{lemma}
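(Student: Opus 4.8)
The plan is to proceed in the three stages suggested by the statement: first establish the structural claim about $N$, then set up the natural identification of $(\hg/\ng)^*$ with a coadjoint-invariant subspace of $\hg^*$, and finally transfer the closedness property.

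For the first stage, let $Q$ denote the $1$-connected Lie group whose Lie algebra is $\hg/\ng$ (which makes sense since $\ng$ is an ideal). Because $H$ is simply connected, the canonical projection of Lie algebras $p\colon\hg\to\hg/\ng$ integrates to a Lie group homomorphism $\pi\colon H\to Q$ with $\de\pi=p$. Its image is an open, hence closed, subgroup of the connected group $Q$, so $\pi$ is surjective; and $K:=\Ker\pi$ is a closed normal subgroup with Lie algebra $\Ker p=\ng$, so that its identity component is exactly $N$. I would then feed the fibration $K\hookrightarrow H\xrightarrow{\pi}Q$ into the long exact sequence of homotopy groups. Using that $\pi_2$ of any Lie group vanishes and that both $H$ and $Q$ are $1$-connected, one reads off $\pi_0(K)=\pi_1(K)=0$, so $K$ is connected and simply connected; hence $K=N$, and therefore $N$ is a closed, $1$-connected, normal subgroup of $H$, with $H/N\cong Q$ also $1$-connected.

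For the second stage, the transpose $p^*\colon(\hg/\ng)^*\to\hg^*$ is a linear homeomorphism onto the annihilator $\ng^\perp=\{\xi\in\hg^*:\langle\xi,X\rangle=0\ \text{for all}\ X\in\ng\}$. I claim $p^*$ intertwines the coadjoint action of $H/N$ with the coadjoint action of $H$ restricted to $\ng^\perp$. Indeed, since $\ng$ is an ideal of $\hg$ and $H$ is connected, $\Ad_H(h)$ preserves $\ng$, so the contragredient action preserves $\ng^\perp$; and for $h\in N$ one has $\Ad_H(h)X-X\in\ng$ for every $X\in\hg$ (expand $\Ad_H(\exp_H Y)=\ee^{\ad Y}$ with $Y\in\ng$ and use that $\ng$ is an ideal), whence $N$ fixes each $\xi\in\ng^\perp$ under the coadjoint action. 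Thus the $H$-action on $\ng^\perp$ factors through $H/N$ and matches, through $p^*$, the coadjoint action of $H/N$. Consequently every coadjoint orbit of $H/N$ is carried by $p^*$ homeomorphically onto a coadjoint orbit of $H$ contained in $\ng^\perp$.

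For the third stage, assume that the coadjoint orbits of $H$ are closed in $\hg^*$. Given a coadjoint orbit $\Oc$ of $H/N$, its image $p^*(\Oc)$ is an $H$-coadjoint orbit lying in the closed subspace $\ng^\perp$, hence closed in $\hg^*$ and therefore closed in $\ng^\perp$; pulling back along the homeomorphism $p^*$ shows $\Oc$ is closed in $(\hg/\ng)^*$. The only genuinely Lie-theoretic input is in the first stage, where the closedness and simple connectivity of $N$ rest on the vanishing of $\pi_2$ for Lie groups; once this is in place, the matching of the coadjoint orbits in the last two stages is a formal consequence of the equivariance of $p^*$ together with the fact that $\ng^\perp$ is a closed subspace.
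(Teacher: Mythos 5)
Your proof is correct and follows essentially the same route as the paper: the identification of $(\hg/\ng)^*$ with the coadjoint-invariant annihilator $\ng^\perp$ via the transpose of the quotient map, the triviality of the $N$-action on $\ng^\perp$, and the resulting equivariant homeomorphism of orbits (with the closedness of orbits transferred through the closed subspace $\ng^\perp$) are exactly the paper's argument. The only difference is that you derive the closedness and simple connectedness of $N$ from the homotopy exact sequence of the fibration $H\to H/N$, whereas the paper simply cites \cite[Thm.~11.1.21]{HN12} for this structural fact.
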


\begin{proof}
The subgroup $N$ is $1$-connected and closed in $H$ by \cite[Thm.~11.1.21]{HN12}. 

The coadjoint action of $H$ leaves $\ng^\perp$ invariant:
for all $\xi\in\ng^\perp$ we have $[\hg,\ng]\subseteq \Ker\xi$, hence $\xi\circ\Ad_H(h)\in\ng^\perp$ and  $\xi\circ\Ad_H(n)=\xi$
for every $h\in H$ and $n\in N$. 
Then using the quotient map $q\colon \hg\to\hg/\ng$,
we get the linear isomorphism 
\begin{equation*}
Q\colon (\hg/\ng)^*\to\ng^\perp:=\{\xi\in\hg^* :  \xi\vert_\ng=0\}, 
\quad 
\eta\mapsto \eta\circ q
\end{equation*}
such that
for every $\eta \in (\hg/\ng)^*$ and $h \in H$, 
$$ Q(\Ad_{H/N}^*(p(h)) \eta )= \Ad_H^*(h) (Q\eta), 
$$ 
where $p\colon H\to H/N$, $p(h):=hN$, is the canonical quotient map.
Thus, via the linear isomorphism $Q$, 
every coadjoint orbit of the quotient group $H/N$ is homeomorphic with a certain coadjoint orbit of $H$. 
Moreover, if $\Ad_H^*(H) Q\eta$ is closed in $\gg^*$, it is closed in $\ng^\perp$ as well, hence $\Ad_{H/N}^*(H/N) \eta$ is also closed in $(\hg/\ng)^*$, for every $\eta \in (\hg/\ng)^*$.
\end{proof}

For the following lemma, if $m\ge 1$ is an integer and $\Vc$ is an $m$-dimensional real vector space, we select an arbitrary basis $\underline{v}$ in $\Vc$  and 
	for any $T\in\End_\RR(\Vc)$, we define its spectrum $\spec\, T:=\{z\in\CC : \det(T_{\underline{v}}-z I_m)=0\}$, 
	where $T_{\underline{v}}\in M_m(\RR)$ is the matrix of $T$ with respect to the basis~$\underline{v}$, while $I_m\in M_m(\RR)$ is the identity matrix. 
It is well known and easy to check that the set $\spec\, T$ does not depend on the basis~$\underline{v}$.

\begin{lemma}
	\label{two1}
Let $\Xc$ be a finite-dimensional real vector space regarded as a 1-con\-nected abelian Lie group, and let
$\rho\colon \Xc\to\End_{\RR}(\Vc)$ be a continuous representation on a finite-dimensional real vector space~$\Vc$. 
Consider the corresponding semidirect product $G:=\Vc\rtimes_\rho \Xc$. 
If $\spec\,\rho(x)\subseteq\TT$ for every $x\in \Xc$, the the following assertions are equivalent: 
\begin{enumerate}[{\rm(i)}]
	\item\label{two1_item1} The Lie group $G$ is type~\I\   
and for every $\xi\in \Vc^*$ its orbit $\rho(\Xc)^*\xi\subseteq\Vc^*$ is a simply connected subset of $\Vc^*$. 
\item\label{two1_item2} For every $x\in \Xc$ we have  $\spec\,\rho(x)=\{1\}$. 
\item\label{two1_item3}   The group $G$ is nilpotent. 
\end{enumerate}
\end{lemma}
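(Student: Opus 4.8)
We begin by turning the statement into linear algebra. Since $\Xc$ is a $1$-connected abelian Lie group, $\rho$ is the exponential of its derivative: setting $L:=\de\rho(0)\colon\Xc\to\End_\RR(\Vc)$, a linear map whose range consists of commuting operators in $\GL(\Vc)$, we have $\rho(x)=\exp(L(x))$ for every $x\in\Xc$. The Lie algebra of $G$ is the metabelian algebra $\gg=\Vc\rtimes_L\Xc$ with bracket $[(v,x),(w,y)]=(L(x)w-L(y)v,0)$, and $G$ is nilpotent if and only if $\gg$ is. A short computation of the descending central series gives $\gg^{j+1}=\sum L(x_1)\cdots L(x_j)\Vc$ for $j\ge1$, so, the family $\{L(x)\}$ being commutative, $\gg$ — and hence $G$ — is nilpotent if and only if every $L(x)$ is a nilpotent operator. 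Complexifying $\Vc$ and using commutativity of the $L(x)$, we obtain finitely many real-linear weights $\lambda_1,\dots,\lambda_r\colon\Xc\to\CC$ with $\spec L(x)=\{\lambda_1(x),\dots,\lambda_r(x)\}$; the hypothesis $\spec\rho(x)\subseteq\TT$ says exactly that each $\lambda_k(x)$ is purely imaginary, so $\lambda_k=\ie\mu_k$ for some $\mu_k\in\Xc^*$. In this language, \emph{$G$ is nilpotent if and only if $\mu_1=\cdots=\mu_r=0$}. (In particular the hypothesis also forces $G$ to be class~R, since then $\Ad_G$ has spectrum in $\TT$.)

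With this dictionary, \eqref{two1_item2}$\Leftrightarrow$\eqref{two1_item3} is immediate: $\spec\rho(x)=\{\ee^{\ie\mu_1(x)},\dots,\ee^{\ie\mu_r(x)}\}$, so \eqref{two1_item2} holds iff $\mu_k(x)\in2\pi\ZZ$ for all $x$ and all $k$; since each $\mu_k$ is a linear functional on the real vector space $\Xc$, taking values in the lattice $2\pi\ZZ$ forces $\mu_k=0$, which is nilpotency. For \eqref{two1_item3}$\Rightarrow$\eqref{two1_item1} I would argue that a $1$-connected nilpotent $G$ is type~\I, while each $\rho(x)=\exp(L(x))$ is then unipotent, so $\rho(\Xc)$ is a $1$-connected unipotent abelian subgroup of $\GL(\Vc)$ acting algebraically on $\Vc^*$; by the Kostant--Rosenlicht theorem its orbits are closed, and being homogeneous spaces of a unipotent group they are diffeomorphic to Euclidean spaces, in particular simply connected.

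The substance of the lemma is \eqref{two1_item1}$\Rightarrow$\eqref{two1_item3}, which I would prove by contraposition. Assume $G$ is not nilpotent, so some weight $\mu_k\neq0$. The plan is to study the closure $A:=\overline{\rho(\Xc)}$ in $\GL(\Vc)$. As every element of $\rho(\Xc)$ has spectrum in $\TT$, so does every element of $A$, and the connected abelian group $A$ therefore splits as $A\cong\TT^a\times\RR^b$, the compact factor $\TT^a$ collecting the elliptic semisimple parts and $\RR^b$ the unipotent directions; here $a\ge1$ exactly because $\mu_k\neq0$ produces a genuine rotation. Since $\TT^a$ and $\RR^b$ commute, I can choose $\xi\in\Vc^*$ fixed by the unipotent factor $\RR^b$ but moved nontrivially by the torus $\TT^a$, and then split into two cases. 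If $\rho(\Xc)$ is closed, then $A=\rho(\Xc)$ and the orbit $\rho(\Xc)^*\xi$ equals the torus orbit $\TT^a\xi$, which retains a circle factor and so has $\pi_1\ne0$, contradicting the simple connectivity in \eqref{two1_item1}. If $\rho(\Xc)$ is not closed, then it is a dense connected proper subgroup of $A$, so its image in the torus factor winds densely, the orbit $\rho(\Xc)^*\xi$ is a proper dense subset of $\TT^a\xi$, and hence is not locally closed; by the Mackey normal-subgroup analysis for the semidirect product $\Vc\rtimes\Xc$ with abelian normal factor $\Vc$, the presence of a non-locally-closed orbit in $\what\Vc\cong\Vc^*$ prevents $G$ from being type~\I, again contradicting \eqref{two1_item1}. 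In either case \eqref{two1_item1} fails, proving the contrapositive.

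The crux is this last implication, and two points need care. First, one must justify the existence of a covector $\xi$ fixed by the unipotent factor and rotated by the torus for which $\TT^a\xi$ genuinely carries a circle (equivalently, the relevant circle meets the stabilizer in finite index); this rests on the weight-space structure of the commuting family $\{L(x)\}$ on the complexification of $\Vc$. Second, one must invoke the Mackey--Glimm dichotomy in the correct form to pass from ``some orbit in $\Vc^*$ is not locally closed'' to ``$G$ is not type~\I'', checking that the abelian little groups $\Xc_\xi$ contribute no further type~\I\ obstruction. It is worth noting that both hypotheses in \eqref{two1_item1} are genuinely used, and for complementary purposes: simple connectivity of the orbits rules out the closed-torus case, while the type~\I\ property rules out the dense-winding case.
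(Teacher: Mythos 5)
Your linear-algebra dictionary and the equivalence (ii)$\Leftrightarrow$(iii) are correct and essentially the paper's argument, and your proof of (iii)$\Rightarrow$(i) is fine. The gap is in the crux implication (i)$\Rightarrow$(iii), and it sits exactly where you flagged it --- but it is not merely a missing verification: the structural claim that $A:=\overline{\rho(\Xc)}\cong\TT^a\times\RR^b$ with $a\ge 1$ whenever some $\mu_k\ne 0$ is false. Take $\Xc=\RR$, $\Vc=\RR^2\oplus\RR^2$, and $L(t)=tJ\oplus tN$ with $J$ the rotation generator and $N\ne0$ nilpotent; then $\spec\rho(t)=\{e^{\pm \ie t},1\}\subseteq\TT$ and $G$ is not nilpotent, yet $t\mapsto\rho(t)$ is a proper embedding (the off-diagonal entry of the unipotent block is $t$), so $\rho(\RR)$ is closed and $A\cong\RR$ with no compact factor: a line winding diagonally around a cylinder is already closed, so the elliptic part is not ``collected'' into a torus. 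In this example your closed case yields no contradiction (the torus $\TT^a$ is trivial) and your dense case does not occur, so the contrapositive is not established; the orbit that actually violates (i) here --- that of a covector supported on the rotation block, whose stabilizer in $\Xc$ is the lattice $2\pi\ZZ$ --- is invisible to your dichotomy. Relatedly, the elements of the ``$\RR^b$ factor'' of $A$ need not be unipotent operators, so the requested covector ``fixed by the unipotent factor and rotated by the torus'' is not available in the form you describe.

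The repair is the reduction the paper performs before any topological argument: using the weight decomposition of the commuting family and Lie's theorem, pass to a quotient $\Vc_0=\Vc/\Vc^0$ on which $\Xc$ acts diagonally by the unitary characters $e^{\ie\beta_j(x)}$ with all $\beta_j\ne0$ --- i.e., kill the unipotent directions first rather than trying to separate them inside $\overline{\rho(\Xc)}$ (type~\I\ and simple connectivity of orbits both descend to this quotient). There $\rho_0(\Xc)$ is a group of unitary operators, and an argument close in spirit to yours does go through: class~R plus type~\I\ gives CCR by Auslander--Moore, hence closed, hence compact orbits; the orbit of $v_0=\sum_j v_{0j}$ is homeomorphic to $\rho_0(\Xc)$, which is therefore a nontrivial torus, contradicting simple connectivity. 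Note that in this route both hypotheses of (i) are used together on the quotient rather than split between a ``closed'' and a ``dense'' case, and the appeal to the Mackey--Glimm dichotomy (your second worry) is avoided entirely.
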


\begin{proof}
``$\eqref{two1_item2}\implies\eqref{two1_item1}$'' 
This is clear. 

``$\eqref{two1_item1}\implies\eqref{two1_item2}$'' 
We recall from \cite{BB21} that, since $\Xc$ is in particular a nilpotent Lie group, 
we have $\Vc=\Vc_1\dotplus\cdots\dotplus\Vc_m$, a direct sum decomposition into $\rho(\Xc)$-invariant subspaces satisfying the condition that for every $j=1,\dots,m$ there exists a field  $\KK_j\in\{\RR,\CC\}$ and a $\KK_j$-vector space structure of $\Vc_j$ that agrees with its structure of a real vector subspace of~$\Vc$ and there exists a function $\chi_j\colon \Xc\to\KK_j^\times$ such that 
\begin{equation}\label{two1_proof_eq0.5}
\rho(x)\vert_{\Vc_j}\in\End_{\KK_j}(\Vc_j)\; \text{ and }\; (\rho(x)\vert_{\Vc_j}-\chi_j(x)\id_{\Vc_j})^{m_j}=0
\end{equation} 
for all $x\in \Xc$, where $m_j:=\dim_{\KK_j}(\Vc_j)$. 
For every $x\in\Xc$ we have $\spec\,\rho(x)=\{\chi_j(x) :  j=1,\dots,m\}$ hence 
the hypothesis $\spec\,\rho(x)\subseteq\TT$ implies that there exist and are uniquely determined the linear functionals 
$\beta_j\colon\Xc\to\RR$ satisfying
\begin{equation}
	\label{two1_proof_eq1}
\chi_j(x)=\ee^{\ie\beta_j(x)}
\text{ for }j=1,\dots,m.
\end{equation}
for all $x\in\Xc$.
After a relabelling of $\Vc_j$, we may assume that there exists $r\in\{1,\dots,m\}$, 
uniquely determined by the condition $\beta_j=0$ for $j=1,\dots,r$ and $\beta_j\ne 0$ if $r<j\le m$. 

We now prove by contradiction that $r=m$, that is, $\beta_j=0$ for every $j=1,\dots,m$. 
We assume $r<m$ and,  
for $j=r+1,\dots,m$, we apply Sophus Lie's classical theorem for the representation $\rho(\cdot)\vert_{\Vc_j}$ of the abelian Lie group $\Xc$ in the $\CC$-vector space~$\Vc_j$.  
We thus select a $\CC$-vector subspace $\Vc_j^0\subseteq\Vc_j$ that is invariant to $\rho(\cdot)\vert_{\Vc_j}$ and satisfies $\dim_{\CC}(\Vc_j/\Vc_j^0)=1$. 
If we fix $v_j\in\Vc_j\setminus\Vc_j^0$, then for every $x\in\Xc$ the property 
$(\rho(x)\vert_{\Vc_j}-\chi_j(x)\id_{\Vc_j})^{m_j}=0$ implies 
$\spec(\rho(x)\vert_{\Vc_j})=\{\chi_j(x)\}$ hence 
\begin{equation}
\label{two1_proof_eq2}
\rho(x)v_j-\chi_j(x)v_j\in \Vc_j^0. 
\end{equation} 
We now define 
$$\Vc^0:=\bigoplus_{j=1}^r\Vc_j\oplus\bigoplus_{j=r+1}^m\Vc_j^0.$$
Then $\Vc^0$ is a $\rho(G)$-invariant $\RR$-vector subspace of $\Vc$, 
hence a normal subgroup of $\Vc\rtimes\Xc=G$. 
Since the group $G$ is type~\I, it follows that the quotient group $G/\Vc^0$ is type \I. 
Denoting $\Vc_0:=\Vc/\Vc^0$, there is a representation $\rho_0\colon \Xc\to\End_\CC(\Vc_0)$ for which we have a Lie group isomorphism $G/\Vc^0\simeq \Vc_0\rtimes \Xc=:G_0$. 
Then, the hypothesis~\eqref{two1_item1} implies that for every $\eta\in\Vc_0^*$ 
its orbit $\rho_0^*(\Xc)\eta\subseteq\Vc_0^*$ is a simply connected subset of~$\Vc_0^*$. 
(Compare the proof of Lemma~\ref{quot_orbits}.)
Moreover, we note that $\Vc_0$ is a $\CC$-vector space with the basis $\{v_{0j}:=v_j+\Vc^0\}_{j=r+1,\dots,m}$, and we have 
\begin{equation}
\label{two1_proof_eq3}
 \rho_0(x)v_{0j}=\chi_j(x)v_{0j}=\ee^{\ie\beta_j(x)}v_{0j} 
\text{ for }j=r+1,\dots,m
\end{equation}
for all $x\in\Xc$, 
by \eqref{two1_proof_eq1}--\eqref{two1_proof_eq2}. 
One now obtains a contradiction.   
Specifically,  
it follows by \cite[Eq. (4.8)]{BB21} that, for $(\eta,0)\in \Vc_0^*\times\Xc^*\subseteq \Vc^*\times\Xc^*=\gg^*$, its coadjoint orbit $G.(\eta,0)\subseteq\gg^*$ is homotopy equivalent to the orbit $\rho_0^*(\Xc)\eta\subseteq\Vc_0^*$, which is simply connected, as we have proved above. 
On the other hand, the fundamental group of the coadjoint orbit $G.(\eta,0)\subseteq\gg^*$ 
is nontrivial by \cite[Lemma 7.3]{ACD12} since~$r<m$.

Alternatively, one can argue as follows. 
We endow $\Vc_0$ with a complex scalar product $(\cdot\mid\cdot)$ for which the set $\{v_{0j}\}_{j=r+1,\dots,m}$ is an orthonormal basis, hence $\rho_0$ is a representation of $\Xc$ by unitary diagonal operators on the finite-dimensional complex Hilbert space $\Vc_0$.

On the other hand, the condition  $\spec\,\rho_0(x)\subseteq\TT$ for every $x\in\Xc$ implies that 
the group $G_0$ is class~R. 
Then, since $G_0$ is type~\I\ by hypothesis, it follows by \cite[Ch. V, \S 1, Th. 2]{AuMo66} 
that $G_0$ is CCR. 
Since $C^*(G_0)\simeq\Cc_0(\Vc_0^*)\rtimes \Xc$ and $\Xc$ is abelian, it then follows by \cite[Th. 8.44]{Wi07} that for every $\xi\in \Vc_0$ its orbit $\rho_0^*(\Xc)\xi\subseteq\Vc_0^*$ is a closed subset of $\Vc_0^*$.
Using the $\Xc$-equivariant isomorphism of finite-dimensional $\RR$-vector spaces $\Vc_0\to\Vc_0^*$, $v\mapsto\Re(\cdot\mid v)$, 
it then follows that for every $v\in\Vc_0$ its orbit $\rho_0(\Xc)v\subseteq\Vc_0$ is a closed subset of $\Vc_0$. 
Using again the fact that $\rho_0(\Xc)$ is a group of unitary operators on the finite-dimensional Hilbert space~$\Vc_0$, it then follows that  for every $v\in\Vc_0$ its orbit $\rho_0(\Xc)v\subseteq\Vc_0$ is actually a compact subset of $\Vc_0$. 

Moreover, for $v_0:=\sum\limits_{j=r+1}^mv_{0j}$ it follows by \eqref{two1_proof_eq3} that the mapping 
$$\rho_0(\Xc)\to \rho_0(\Xc)v_0,\quad \rho_0(x)\mapsto\rho_0(x)v_0$$
is a homeomorphism, hence the connected abelian group of unitary operators $\rho_0(\Xc)$ is compact, and then it is homeomorphic to a torus. 
Thus the orbit $\rho_0(\Xc)v_0\subseteq\Vc_0$ is homeomorphic to a torus, 
and then it is not simply connected, which is a contradiction with our hypothesis. 

``$\eqref{two1_item2}\iff \eqref{two1_item3}$'' 
For arbitrary $x\in\Xc$ and $t\in\RR$ we have $\rho(tx)=\ee^{\de\rho(tx)}=\ee^{t\de\rho(x)}$. 
This shows that \eqref{two1_item2} is equivalent to $\spec\,\de\rho(x)=\{0\}$ for every $x\in\Xc$, which is further equivalent to the fact that for every $x\in \Xc$ the endomorphism $\de\rho(x)\in\End_\RR(\Vc)$ 
is nilpotent, that is, $\de\rho(x)^m=0$, where $m:=\dim_\RR\Vc$. 
This last condition is clearly equivalent to the fact that the semidirect product of abelian Lie algebras $\gg:=\Vc\rtimes_{\de\rho}\Vc$ is nilpotent, 
which is further equivalent to~\eqref{two1_item3}.
\end{proof}

\begin{lemma}
\label{two}
Theorem~\ref{IRS} holds true if the group $G$ is the semidirect product of two 1-connected abelian Lie groups. 
\end{lemma}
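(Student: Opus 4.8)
The plan is to prove both implications of Theorem~\ref{IRS} for $G=\Vc\rtimes_\rho\Xc$ by reducing everything to the already-established Lemma~\ref{two1}, with the general equivalences of Lemma~\ref{closed} serving as the bridge from the topological hypotheses. The forward implication---that nilpotency forces the coadjoint orbits to be closed and simply connected---is the standard part: a $1$-connected nilpotent Lie group has closed coadjoint orbits, each diffeomorphic to some $\RR^{2d}$ by Kirillov theory and hence simply connected (see \cite{CG90}). So the substance lies in the converse.

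For the converse I would first record how the class~R condition for $G$ translates into a spectral condition on $\rho$. Writing group elements as $(v,x)\in\Vc\times\Xc$ and decomposing $\gg=\Vc\oplus\Xc$, a direct computation of the adjoint action gives, for every $(v,x)\in G$, a block upper-triangular matrix
\[
\Ad_G((v,x))=\begin{pmatrix}\rho(x)&N_v\\ 0&\id_\Xc\end{pmatrix},
\]
where $N_v\colon\Xc\to\Vc$ is the linear map $\dot y\mapsto-\de\rho(\dot y)v$. Consequently $\spec\,\Ad_G((v,x))=\spec\,\rho(x)\cup\{1\}$, so that $G$ is class~R if and only if $\spec\,\rho(x)\subseteq\TT$ for every $x\in\Xc$; this is exactly the standing hypothesis needed to invoke Lemma~\ref{two1}.

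Now assume the coadjoint orbits of $G$ are closed and simply connected. Since $G$ is $1$-connected and solvable (in fact metabelian, as $[\gg,\gg]\subseteq\Vc$), Lemma~\ref{closed} applies with its standing simple-connectivity hypothesis satisfied; the closedness of the orbits is condition~(ii) there, so condition~(i) holds and $G$ is class~R and type~\I. By the preceding paragraph $\spec\,\rho(x)\subseteq\TT$ for all $x$, so the hypotheses of Lemma~\ref{two1} are in force. It then remains to verify condition~(i) of that lemma: type~\I\ has just been obtained, and for every $\xi\in\Vc^*$ the orbit $\rho^*(\Xc)\xi\subseteq\Vc^*$ is homotopy equivalent to the coadjoint orbit $G\cdot(\xi,0)$ by \cite[Eq.~(4.8)]{BB21}, which is simply connected by hypothesis; hence $\rho^*(\Xc)\xi$ is simply connected. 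Thus condition~(i) of Lemma~\ref{two1} holds, and its equivalence with condition~(iii) yields that $G$ is nilpotent, completing the converse.

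The main obstacle is precisely this reduction, which rests on two translations between the intrinsic topology of the coadjoint orbits of $G$ and the data $(\rho,\Vc,\Xc)$: identifying the class~R property of $G$ with the circle-spectrum condition on $\rho$ through the $\Ad$ computation above, and transferring simple connectivity from the coadjoint orbits $G\cdot(\xi,0)$ to the dual orbits $\rho^*(\Xc)\xi$ via the homotopy equivalence of \cite{BB21}. Once these are secured, Lemma~\ref{closed} and Lemma~\ref{two1} carry out the remaining work, and no further estimates are required.
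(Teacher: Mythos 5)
Your argument is correct, and it funnels into Lemma~\ref{two1} just as the paper does, but the mechanism of the reduction is genuinely different. The paper quotes \cite[Prop.~4.4(1.(b))]{Ba98}: every coadjoint orbit of $G=\Vc\rtimes\Kc$ is diffeomorphic to the cotangent bundle of a $\Kc$-orbit in $\Vc^*$ and conversely, so ``closed and simply connected coadjoint orbits'' translates in one stroke into ``closed and simply connected $\Kc$-orbits in $\Vc^*$,'' after which Lemma~\ref{two1} is invoked. You instead route the closedness hypothesis through Lemma~\ref{closed} to get that $G$ is class~R and type~\I, translate class~R into $\spec\rho(x)\subseteq\TT$ by the explicit block-triangular computation of $\Ad_G((v,x))$ (which is correct: the conjugation formula gives $\Ad_G((v,x))(\dot w,\dot y)=(\rho(x)\dot w-\de\rho(\dot y)v,\dot y)$, whence $\spec\Ad_G((v,x))=\spec\rho(x)\cup\{1\}$), and transfer simple connectivity to the orbits $\rho^*(\Xc)\xi$ via the homotopy equivalence of \cite[Eq.~(4.8)]{BB21}. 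What your version buys is that it makes fully explicit how the two hypotheses of Lemma~\ref{two1} --- the standing spectral condition $\spec\rho(x)\subseteq\TT$ and the type~\I\ property in condition~(i) --- are actually obtained from the topological assumptions; the paper's two-line proof leaves this implicit (its reduction produces ``closed $\Kc$-orbits,'' whereas condition~(i) of Lemma~\ref{two1} asks for type~\I, so some bridge such as Lemma~\ref{closed} is needed there too). What the paper's route buys is brevity and the avoidance of Lemma~\ref{closed} as an intermediary, since the cotangent-bundle picture handles closedness and the fundamental group simultaneously. Both arguments are sound.
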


\begin{proof}
We may assume that $G=\Vc\rtimes \Kc$, where both $\Vc$ and $\Kc$ are vector groups. 
Since $\Kc$ is abelian, it follows by 
\cite[Prop. 4.4(1.(b))]{Ba98}  (and see also \cite[Rem.~4.15]{BB21})
that every coadjoint orbit of $G$ is diffeomorphic to the cotangent bundle of some orbit of $\Kc$ in the dual space $\Vc^*$ 
and conversely, every cotangent bundle of a $\Kc$-orbit in $\Vc^*$ occurs in this way from some coadjoint orbit of $G$. 
Thus, the hypothesis that the coadjoint orbits of $G$ are simply connected and closed is equivalent to the condition that all the $\Kc$-orbits in $\Vc^*$ are simply connected and closed.
The statement is then a consequence of Lemma~\ref{two1}.
\end{proof}

\begin{proof}[Proof of Theorem~\ref{IRS}]
``$\Rightarrow$'' If $G$ is nilpotent, its coadjoint orbits are closed and 
homeomorphic to $\RR^k$ for some  integer $k\ge 0$ (\cite[Thm.~3.1.4]{CG90}). 

``$\Leftarrow$''
We proceed by
induction on~$\dim G$. 
The case $\dim G=1$ is clear. 
We now assume that the statement holds true for all solvable Lie groups having their dimension less than $\dim G$.

There are two possible cases, depending on the commutator subgroup $D:=[G,G]$, which is a closed 1-connected nilpotent subgroup of $G$. 

Case 1: $\dim[D,D]>0$. 
We consider the quotient Lie group $H:=G/[D,D]$, with Lie algebra $\hg$.
The coadjoint orbits of $H$ are simply connected  and closed in $\hg^*$ by the hypothesis on~$G$ along with Lemma~\ref{quot_orbits}. 
Since $\dim[D,D]>0$, we have $\dim H <\dim G$ hence, by the induction hypothesis, 
it follows that $H$ is a nilpotent Lie group. 
On the other hand $[D, D]$ is a nilpotent group as well, hence Lemma~\ref{Hall} shows that the group $G$ is nilpotent. 

Case 2: $[D,D]=\{\1\}$. 
Since $D=[G,G]$, it follows that the group $G$ is metabelian, 
and there are two possible subcases, depending on the center $Z$ of~$G$. 
\begin{itemize}
	\item[(2a)] $\dim Z>0$. 
	The quotient group $G/Z$ has simply connected coadjoint orbits that are closed in $\gg^*/\zg^*$, 
	by the hypothesis on~$G$ along with Lemma~\ref{quot_orbits}. 
	Since $\dim G/Z<\dim G$, it follows by the induction hypothesis that $G/Z$ is a nilpotent group. 
	Since $Z$ is the centre of $G$, this implies that the group $G$ is nilpotent. 
	\item[(2b)] $Z=\{\1\}$. 
	In this subcase, Proposition~\ref{P3} is applicable and shows that $G$ is the semidirect product of two 1-connected abelian Lie groups. 
	Then the assertion follows by Lemma~\ref{two}.
	\end{itemize}
This completes the proof by induction on $\dim G$. 
\end{proof}

\begin{remark}\label{Boidol}
\normalfont
We note that Theorem~\ref{IRS} is no longer valid if we assume that the coadjoint orbits are closed  and simply connected only for 
an open dense set of coadjoint orbits. 
Indeed, consider the exponential Lie group $G$, whose Lie algebra~$\gg$ has a basis 
$\{X_1, X_2, X_3, X_4\}$
and non-trivial brackets
$$ [X_4, X_3]=-X_3, \; [X_4, X_2]= X_2, \; [X_3, X_2]=X_1.$$
The group $G$ is called the split oscillator, or the Boidol group. 
Since $G$ is exponential, all its coadjoint orbits are simply connected. 
If we denote by $X_1^*, X_2^*, X_3^*, X_4^*$ the dual basis in $\gg^*$, the coadjoint orbits $\Ad^*(G)\ell$ with 
$$\ell=\xi_4 X_4^* + \xi_1 X_1^*, \quad \xi_4\in \RR,\; \xi_1\in \RR^\times $$ 
are closed and form an open dense subset of $\gg^*/G$. 
Nevertheless, the orbits $\Ad^*(G) \ell$ with $\ell= \xi_2 X_2^* +\xi_3 X_3^*$, 
$\xi_2^2+\xi_3 ^2\ne 0$ are not closed, and the group $G$ is not nilpotent.
See \cite[Subsect.~2.2]{LLu24} for more details.
\end{remark}

\section{Proof of Theorem~\ref{main-main_thm}}
\label{proofMain}

Before starting the proof of Theorem~\ref{main-main_thm}, 
we recall some facts on the Heisenberg group $H_n$ and its $C^*$-algebra. 

Fix $n\in \NN$.  
The Lie algebra $\hg_n$  of $H_n$ is generated by 
a basis $\{X_0, X_1, \cdots, X_{2n}\}$
with the non-trivial Lie brackets given by
$$ [X_j, X_{j+n}] = X_0, \qquad j=1, \dots, n.$$
The unitary dual of $H_{n}$ is 
$$ \widehat{H_{n}}=\widehat{C^*(H_n)} =\Gamma_\infty\sqcup\Gamma_1
$$ 
The points in $\Gamma_\infty\simeq\RR^\times$ correspond to the infinite dimensional representations, unitary equivalent to the Schr\"odinger representations, 
while $\Gamma_1\simeq\RR^{2n}$ consists of characters (1-dimensional characters). 
The Hausdorff open subset $\Gamma_\infty$ is dense in~$\widehat{H_{n}}$, and moreover, a
subset $F\subseteq\widehat{H_{n}}$ is closed if and only if $F\cap \Gamma_j$ is closed in~$\Gamma_j$ for $j=1, \infty$, and 
 if $0\in\RR$ is an accumulation point of $F\cap\Gamma_\infty$, then $\Gamma_1\subseteq F$. 

A description of $C^*(H_n)$ as an algebra of operator fields can be found in \cite{LuT11}.

\subsection{Puk\'anszky's correspondence and coadjoint orbits}
We need some results on Puk\'anszky's correspondence in the special case of a locally closed coadjoint orbit, 
so let us give a sketchy reminder of the construction in this case.

Assume first that $G$ is a 1-connected solvable Lie group
with its Lie algebra $\gg$.
The Lie algebra of the closed connected normal subgroup 
\begin{equation}\label{D_def}
D:=[G,G]
\end{equation}  of $G$ is 
the derived ideal $\dg:=[\gg,\gg]$. 
For $\xi\in \gg^*$  arbitrary fixed, we denote by
$G(\xi):=\{g\in G :  g\xi=\xi\}$  the stabilizer at $\xi$ 
 and let
	 $G(\xi)_\1$ denote the connected component of $\1\in G(\xi)$. 
Here $G(\xi)$ is a closed subgroup of $G$ with its Lie algebra denoted by $\gg(\xi)$, while $G(\xi)_\1$ is an 1-connected closed subgroup of $G$ whose Lie algebra is again~$\gg(\xi)$.
 
Now let $\Oc \in \gg^*/G$ be fixed and assume that $\Oc$ is a locally closed subset of $\gg^*$. 
For $\xi\in \Oc$, the group $K:=D G(\xi):=\{xg : x\in D,g\in G(\xi)\}$ is a closed subgroup of~$G$ independent on $\xi \in \Oc$, and the connected component of $\1\in K$ is 
$K_\1= DG(\xi)_\1$. 
Then the quotient $\Pi= K/K_\1\simeq G(\xi)/G(\xi)_\1$ is a finitely generated free abelian group, isomorphic with $\ZZ^{\rk(\Oc)}$, where $\rk(\Oc)$ is a non-negative integer.

 Denote $$\mathop{G}\limits^{\mathsout{\wedge}}(\xi):=
	\{\chi\in\Hom(G(\xi),\TT) :  \chi\vert_{G(\xi)_\1}=\chi_\xi \}, $$
	where $\chi_\xi(\exp X) =\ee^{\ie\langle \xi, X\rangle}$ for $X\in \gg(\xi)$.
	Consider then 
$$\Bun(\Oc):=\bigsqcup_{\xi\in\Oc}\{\xi\}\times \mathop{G}\limits^{\mathsout{\wedge}}(\xi). 
$$
The group $G$ acts naturally on $\Bun (\Oc)$.
Also, the group
\begin{equation}\label{pi1} 
 \widetilde{\Pi}= \{ \phi\in \Hom(K, \TT) :  
\phi\vert_{K_1}=1\}\simeq \Hom(\Pi, \TT)\simeq \TT^{\rk(\Oc)},
\end{equation}
acts on $\Bun(\Oc)$ by 
$$ p=(\xi, \chi) \mapsto p \chi = (\xi, \chi \phi\vert_{G(\xi)})$$
for every $\phi\in \widetilde{\Pi}$, and the actions of $G$ and 
$\widetilde{\Pi}$ commute.
This gives 
 a natural $G$-equivariant bijection 
$\Bun(\Oc)\to \Oc\times\TT^{\rk(\Oc)}$ 
and $\Bun(\Oc)$ is endowed with the smooth manifold structure transported from 
$\Oc\times\TT^{\rk(\Oc)}$ via the above bijection.
(See \cite[Subsect. 6.3, page 537]{Pu71}.)
Moreover
$$\tau\colon \Bun(\Oc)\to \Oc, \quad \tau(\xi, \chi)= \xi$$
is a principal bundle with structural  group $\widetilde{\Pi}$.

\begin{remark}\label{Pi-remark}
\normalfont
(a) Let $G$ and $\Oc$ be as above.
The group $\widetilde{\Pi}$ is isomorphic to the dual $\widehat{\pi_1(\Oc)}$ of the fundamental group 
$\pi_1(\Oc)$ of $\Oc$.  
Indeed, for every $\xi \in \Oc$, $\Pi\simeq G(\xi)/G(\xi)_\1\simeq \pi_1(\Oc)$ (see \cite[page 258]{AuKo71}). 

(b) 
If we assume that $G$ is type I, then Puk\'anszky's correspondence
is a bijection 
$$\kappa\colon  \bigsqcup_{\Oc \in \gg^*/G} \Bun(\Oc)/G \to \widehat{G}.$$
(See also \cite[Thm. V.3.3, page 259]{AuKo71} and Lemma~\ref{trivial} below.)
Moreover, the restriction $$\kappa\vert_{\Bun(\Oc)/G}\colon 
\Bun(\Oc)/G \to \widehat{G}$$ is continuous, by 
\cite[Lemma 8, p. 93 and Thm. 1, p. 114]{ Pu73}.
\end{remark}

 \begin{lemma}\label{trivial}
If $\Oc \in \gg^*/G$ be locally closed, 
then $\Bun(\Oc)/G$ is homeomorphic to~$\widetilde{\Pi}$. 
\end{lemma}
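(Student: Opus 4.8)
The plan is to fix a base point $\xi_0\in\Oc$ and to show that the projection $\tau\colon\Bun(\Oc)\to\Oc$, which is a $G$-equivariant principal $\widetilde{\Pi}$-bundle, induces a homeomorphism from the single fiber $\tau^{-1}(\xi_0)$ onto $\Bun(\Oc)/G$. Since $G$ acts transitively on $\Oc$ and $\tau$ is $G$-equivariant (because $\tau(g\cdot(\xi,\chi))=g\xi$), the image under $\tau$ of any $G$-orbit is a nonempty $G$-invariant subset of $\Oc$, hence all of $\Oc$; thus every $G$-orbit meets $\tau^{-1}(\xi_0)$. The core of the argument is to prove that it does so in exactly one point, and then that the resulting bijection $\tau^{-1}(\xi_0)\to\Bun(\Oc)/G$ is actually a homeomorphism. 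Once this is achieved, the conclusion is immediate, since the fiber $\tau^{-1}(\xi_0)=\{\xi_0\}\times\mathop{G}\limits^{\mathsout{\wedge}}(\xi_0)$ of the principal bundle is a $\widetilde{\Pi}$-torsor, hence homeomorphic to $\widetilde{\Pi}$.

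The key algebraic observation is that the stabilizer $G(\xi_0)$ acts trivially on $\tau^{-1}(\xi_0)$. Under the natural $G$-action an element $g$ sends $(\xi,\chi)$ to $(g\xi,\chi\circ c_{g^{-1}})$, where $c_{g^{-1}}(h)=g^{-1}hg$ and we use $G(g\xi)=gG(\xi)g^{-1}$. For $g\in G(\xi_0)$ this preserves the fiber, and since each $\chi$ takes values in the \emph{abelian} group $\TT$ we get $\chi(g^{-1}hg)=\chi(h)$ for every $h\in G(\xi_0)$; hence $g$ fixes $\chi$. Consequently, if two points of $\tau^{-1}(\xi_0)$ lie in one $G$-orbit, any element carrying one to the other must stabilize $\xi_0$, so it lies in $G(\xi_0)$ and therefore acts as the identity: the two points coincide. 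This gives the set-theoretic bijection $\tau^{-1}(\xi_0)\to\Bun(\Oc)/G$.

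To upgrade this to a homeomorphism I would build a continuous $G$-invariant retraction $R\colon\Bun(\Oc)\to\tau^{-1}(\xi_0)$ and check that it descends to the inverse of the quotient map restricted to the fiber. Because $\Oc$ is locally closed, the orbit map $G\to\Oc\simeq G/G(\xi_0)$ is a locally trivial principal $G(\xi_0)$-bundle, so it admits local sections $s$ with $s(\eta)\xi_0=\eta$; setting $R(p):=s(\tau(p))^{-1}p$ locally produces a point of $\tau^{-1}(\xi_0)$. The triviality of the $G(\xi_0)$-action on the fiber makes $R(p)$ independent of the chosen local section, so $R$ is globally well-defined and continuous, restricts to the identity on $\tau^{-1}(\xi_0)$, and is $G$-invariant since $R(gp)$ and $R(p)$ lie in the same $G$-orbit and in the one fiber over $\xi_0$. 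Hence $R$ factors through a continuous map $\Bun(\Oc)/G\to\tau^{-1}(\xi_0)$ inverse to the continuous composite $\tau^{-1}(\xi_0)\hookrightarrow\Bun(\Oc)\to\Bun(\Oc)/G$, which establishes the homeomorphism.

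Finally I would record that $\tau^{-1}(\xi_0)$ is homeomorphic to $\widetilde{\Pi}$: it is a torsor under the \emph{free} $\widetilde{\Pi}$-action $\chi\mapsto\chi\cdot(\phi\vert_{G(\xi_0)})$, freeness following because any $\phi\in\widetilde{\Pi}$ is trivial on $K_\1\supseteq D$, so $\phi\vert_{G(\xi_0)}=1$ together with $\phi\vert_D=1$ forces $\phi\vert_{K}=1$, i.e. $\phi=1$. I expect the main obstacle to be the topological step rather than the algebra: the triviality of the $G(\xi_0)$-action is an immediate consequence of $\TT$ being abelian, but passing from a bijection to a homeomorphism of the quotient spaces requires the local sections of $G\to\Oc$, and this is precisely the point where the local closedness hypothesis on $\Oc$ enters.
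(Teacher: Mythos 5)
Your proposal is correct, and it reorganizes the argument in a way that is genuinely different from the paper's proof, which is shorter but leans on an external input: the paper invokes the existence of a smooth $G$-equivariant global section $\sigma\colon\Oc\to\Bun(\Oc)$ (cited from the proof of Prop.~5.2 in \cite{BB21}), uses it to trivialize the bundle globally via $\Psi(\xi,z)=\sigma(\xi)z$, and then reads off the homeomorphism $\widetilde{\Pi}\to\Bun(\Oc)/G$ from the commutative square $\pr_2=q\circ\Psi$. You instead work with a single fiber $\tau^{-1}(\xi_0)$ and prove from scratch the fact that makes everything work, namely that $G(\xi_0)$ acts trivially on $\tau^{-1}(\xi_0)$ because characters of $G(\xi_0)$ are conjugation-invariant ($\chi(g^{-1}hg)=\chi(g)^{-1}\chi(h)\chi(g)=\chi(h)$, $\TT$ being abelian); this is essentially a direct proof of the existence of the equivariant section that the paper outsources, since fixing a point $p_0$ in the fiber and setting $\sigma(g\xi_0):=g\,p_0$ is well defined exactly when $G(\xi_0)$ fixes $p_0$. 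Your retraction $R(p)=s(\tau(p))^{-1}p$ built from local sections of $G\to G/G(\xi_0)$ (which is where local closedness of $\Oc$ enters, via the homeomorphism $G/G(\xi_0)\simeq\Oc$) is independent of the section and $G$-invariant precisely because of that trivial action, and it correctly provides the continuous inverse to $\tau^{-1}(\xi_0)\hookrightarrow\Bun(\Oc)\to\Bun(\Oc)/G$; the identification of the fiber with $\widetilde{\Pi}$ is then immediate from the principal-bundle (torsor) structure, with your freeness check $\phi\vert_{G(\xi_0)}=1$ and $\phi\vert_{D}=1$ forcing $\phi=1$ on $K=DG(\xi_0)$ being the right one. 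What your route buys is self-containedness and transparency about where each hypothesis is used; what the paper's route buys is brevity and a global smooth trivialization $\Bun(\Oc)\simeq\Oc\times\widetilde{\Pi}$ that is reused implicitly elsewhere.
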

 
 \begin{proof}
 There exists a smooth, $G$-equivariant 
 mapping $\sigma\colon \Oc \to \Bun(\Oc)$ such that $\tau \circ \sigma=\id_\Oc$, by \cite[Proof of Prop.~5.2]{BB21}.
 Then, since $\Bun(\Oc)$ is a principal bundle with structural group $\widetilde{\Pi}$, the map defined by
 $$ \Psi\colon \Oc \times
 \widetilde{\Pi} \to \Bun (\Oc), 
 \quad (\xi, z)\mapsto \sigma(\xi) z $$
 is a diffeomorphism.
 Since $\sigma$ is equivariant, we have that 
 $ \Psi(g \xi, z) = g \Psi(\xi, z)$ for every $g\in G$, $\xi \in \Oc$ and 
 $z\in \widetilde{\Pi}$.
Therefore, for arbitrary $\xi_0\in\Oc$, the map 
 $$ \Psi_{\xi_0} \colon \widetilde{\Pi}\to \Bc(\Oc)/G, \quad  
 z\mapsto G \Psi(\xi_0, z)= G\sigma(\xi_0)z = \sigma(G\xi_0)z =\sigma(\Oc) z$$
 is correctly defined. 
 Here $G\xi_0=\Oc$ since $\xi_0\in\Oc$, while $\Oc\in\gg^*/G$ by hypothesis.
 Moreover, the diagram 
 $$ 
\xymatrix{ 
\Oc \times \widetilde{\Pi}
 \ar[d]_{\pr_2} \ar[rr]^{\Psi} &  &
\Bun(\Oc) \ar[d]^{q}\\ 
\widetilde{\Pi}
 \ar[rr]^{\Psi_{\xi_0}}  & & \Bun(\Oc)/G
}
$$
is commutative, and since $\Psi$ is a homeomorphism, it follows that 
$\Psi_{\xi_0}$ is a homeomorphism. 
 \end{proof}

\subsection{$C^*$-rigidity of the Heisenberg group within the class of solvable Lie groups}

This subsection is devoted to the proof of the following result.

\begin{proposition}\label{main_thm}
Let $G$ be a solvable 1-connected Lie group such that
 $C^*(G) \simeq C^*(H_{n})$.
 Then 
 $G\simeq H_{n}$.
\end{proposition}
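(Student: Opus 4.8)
The plan is to show that the hypothesis $C^*(G)\simeq C^*(H_n)$ forces $G$ to be nilpotent; once this is known, $G$ is exponential and the $C^*$-rigidity of $H_n$ within $\mathcal{EG}$ established in \cite{BB-JTA} (Step~3 of the introduction) gives $G\simeq H_n$. First I would observe that $C^*(H_n)$ is CCR, since $H_n$ is nilpotent, and that being CCR is an isomorphism invariant of the $C^*$-algebra; hence $C^*(G)$ is CCR, so $G$ is CCR and in particular type~I. The $*$-isomorphism also yields a homeomorphism of spectra $\widehat G\simeq\widehat{H_n}=\Gamma_\infty\sqcup\Gamma_1$, where $\Gamma_\infty\simeq\RR^\times$ is the Hausdorff open set of infinite-dimensional representations and $\Gamma_1\simeq\RR^{2n}$ consists of the one-dimensional characters. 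In view of Theorem~\ref{IRS} and Lemma~\ref{closed}, it then suffices to prove that every coadjoint orbit of $G$ is simply connected: Lemma~\ref{closed}, whose standing hypothesis is precisely the simple connectedness of the orbits, will upgrade the CCR property to closedness of all orbits, and Theorem~\ref{IRS} will then deliver nilpotency.

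The heart of the matter is therefore the simple connectedness of the coadjoint orbits, and this is where Puk\'anszky's correspondence enters. Since $G$ is type~I, all its coadjoint orbits are locally closed and the correspondence gives a bijection $\kappa\colon\bigsqcup_{\Oc\in\gg^*/G}\Bun(\Oc)/G\to\widehat G$ whose restriction to each piece is continuous and injective. By Lemma~\ref{trivial} together with Remark~\ref{Pi-remark}, $\Bun(\Oc)/G$ is homeomorphic to $\widetilde\Pi\simeq\TT^{\rk(\Oc)}$, while $\pi_1(\Oc)\simeq\ZZ^{\rk(\Oc)}$; hence $\Oc$ is simply connected if and only if $\rk(\Oc)=0$. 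I would then fix $\Oc$ and split into two cases. If $\dim\Oc=0$, then $\Oc$ is a single fixed point of the coadjoint action, so $G(\xi)=G$ is connected and $\rk(\Oc)=0$ trivially. If $\dim\Oc>0$, then no representation lying over $\Oc$ is a character (by the Kirillov--Puk\'anszky theory the representation $\kappa(\xi,\chi)$ is one-dimensional exactly when $\Oc=G\xi$ is a point), so $\kappa$ maps $\Bun(\Oc)/G$ into $\Gamma_\infty$. Restricting the codomain to the Hausdorff space $\Gamma_\infty\simeq\RR^\times$, the continuous injection from the compact space $\TT^{\rk(\Oc)}$ becomes a homeomorphism onto its image; but $\RR^\times$ is a one-dimensional manifold and hence contains no subspace homeomorphic to a circle, forcing $\rk(\Oc)=0$ again. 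Thus every coadjoint orbit of $G$ is simply connected.

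With simple connectedness in hand, Lemma~\ref{closed} shows the orbits are closed, Theorem~\ref{IRS} then yields that $G$ is nilpotent, hence exponential, and the rigidity result of \cite{BB-JTA} completes the proof that $G\simeq H_n$. The step I expect to be most delicate is the main one: ensuring that the pieces $\Bun(\Oc)/G$ arising from positive-dimensional orbits genuinely land inside the Hausdorff part $\Gamma_\infty$, so that the compact-to-Hausdorff argument can be applied, and more broadly controlling the interaction between the non-Hausdorff topology of $\widehat{H_n}$ and the fibered decomposition supplied by $\kappa$. Confirming that type~I forces the local closedness of \emph{all} coadjoint orbits, so that Puk\'anszky's correspondence is available uniformly over $\gg^*/G$, is the other point that will require care.
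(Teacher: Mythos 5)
Your proposal is correct and follows essentially the same route as the paper: Puk\'anszky's correspondence together with Lemma~\ref{trivial} identifies $\Bun(\Oc)/G$ with a torus that injects continuously into $S_\infty\simeq\RR^\times$, forcing $\rk(\Oc)=0$ and hence simple connectedness of every coadjoint orbit, after which Theorem~\ref{IRS} and the rigidity result of \cite{BB-JTA} finish the argument. The only organizational difference is that the paper disposes of your two flagged worries at the outset by invoking the implication CCR $\Rightarrow$ closed coadjoint orbits (the part of Lemma~\ref{closed} quoted from \cite{Pu78}, which does not use the simple-connectedness hypothesis), so all orbits are closed --- in particular locally closed --- before the correspondence is applied.
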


Assume from now on that $G$ is a solvable 1-connected Lie group such that 
 $C^*(G) \simeq C^*(H_{n})$.  
Hence the group $G$ is CCR, in particular type~$\I$, and
 \begin{equation*}
 \widehat{G} = S_1 \sqcup S_\infty, 
\end{equation*}
where $S_1=\Hom(G, \TT)$ is the set of characters, while
\begin{equation*}
S_\infty \simeq \RR^{\times}
\end{equation*} 
 corresponds to infinite-dimensional irreducible representations.

 The group $G$ is CCR, hence for every $\xi \in \gg^*$, the coadjoint orbit 
 $\Oc_\xi=\Ad^*(G)\xi$ is closed in $\gg^*$, by Lemma~\ref{closed}.

 \begin{lemma}\label{4}
For every $\xi \in \gg^*$, the stabilizer $G(\xi)$ is connected and 
the coadjoint orbit $\Oc_\xi$ is simply connected. 
\end{lemma}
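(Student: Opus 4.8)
The plan is to reduce both conclusions of the lemma to the single numerical statement that $\rk(\Oc_\xi)=0$ for every coadjoint orbit, and then to exclude $\rk(\Oc_\xi)\ge 1$ by a topological argument inside $\widehat{G}$. First I would record the relevant identifications from the previous subsection: by the description of $\Pi$ and Remark~\ref{Pi-remark}(a) we have $\pi_1(\Oc_\xi)\simeq\Pi\simeq G(\xi)/G(\xi)_\1\simeq\ZZ^{\rk(\Oc_\xi)}$. Consequently $G(\xi)$ is connected if and only if $\Pi$ is trivial, if and only if $\rk(\Oc_\xi)=0$, if and only if $\Oc_\xi$ is simply connected. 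Thus the two assertions of the lemma are in fact equivalent, and it suffices to prove that $\rk(\Oc)=0$ for every $\Oc\in\gg^*/G$.

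Since $G$ is CCR, each coadjoint orbit $\Oc_\xi$ is closed in $\gg^*$ (as already noted via Lemma~\ref{closed}), hence locally closed, so the Puk\'anszky construction applies to every orbit. By Lemma~\ref{trivial} together with \eqref{pi1} this gives a homeomorphism $\Bun(\Oc)/G\simeq\widetilde{\Pi}\simeq\TT^{\rk(\Oc)}$, and by Remark~\ref{Pi-remark}(b) the restriction $\kappa\vert_{\Bun(\Oc)/G}\colon\TT^{\rk(\Oc)}\to\widehat{G}$ is a \emph{continuous injection}.

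Now I would argue by contradiction, assuming $\rk(\Oc)\ge 1$ for some orbit $\Oc$. Then $\pi_1(\Oc)\ne 0$, so $\Oc$ is not a single point and $\dim\Oc>0$; the irreducible representations parametrized by $\Bun(\Oc)/G$ are therefore infinite-dimensional, so they are not characters and $\kappa(\Bun(\Oc)/G)\subseteq S_\infty$. Transporting along the homeomorphism $\widehat{G}\simeq\widehat{H_n}$ afforded by $C^*(G)\simeq C^*(H_n)$, the piece $S_\infty\simeq\RR^\times$ is Hausdorff. A continuous injection from the compact space $\TT^{\rk(\Oc)}$ into the Hausdorff space $S_\infty$ is then a homeomorphism onto its image, exhibiting $\TT^{\rk(\Oc)}$ as a compact connected subset of $\RR^\times$. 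But such a subset necessarily lies in a single component of $\RR^\times$, each of which is homeomorphic to $\RR$, so it is a point or a closed interval; since $\TT^{\rk(\Oc)}$ with $\rk(\Oc)\ge 1$ is neither, we reach a contradiction. Hence $\rk(\Oc)=0$ for every orbit, which establishes both assertions.

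The main obstacle will be to justify cleanly that the representations attached to a positive-dimensional orbit are infinite-dimensional, so that the image of $\kappa\vert_{\Bun(\Oc)/G}$ genuinely lands in the Hausdorff stratum $S_\infty$; the whole embedding argument hinges on the target being Hausdorff, whereas the non-embeddability of a positive-dimensional torus into the $1$-manifold $\RR^\times$ is elementary once this is in place.
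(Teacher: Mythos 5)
Your proof is correct and follows essentially the same route as the paper: identify $\Bun(\Oc)/G$ with $\TT^{\rk(\Oc)}$ via Lemma~\ref{trivial}, observe that $\kappa$ restricts to a continuous injection of this compact connected torus into $S_\infty\simeq\RR^\times$, and conclude that the torus, hence $\rk(\Oc)$ and $\pi_1(\Oc_\xi)\simeq G(\xi)/G(\xi)_\1$, must be trivial. The one step you flag as an obstacle is handled in the paper simply by noting that the characters of $G$ are exactly the images under the bijection $\kappa$ of the point orbits, i.e.\ those with $\xi\in[\gg,\gg]^\perp$ (where $G(\xi)=G$ and there is nothing to prove), so every other orbit is necessarily sent into $S_\infty$.
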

 
 \begin{proof}
 There is a bijective mapping
 $$ \kappa\colon \bigsqcup\limits_{\Oc \in \gg^*/G} 
 \Bun (\Oc)/G \to \widehat{G}, $$
 and the restriction 
 $ \kappa\vert_{\Bun(\Oc_\xi) /G}\colon 
 {\Bun(\Oc_\xi)/G} \to \widehat{G}$
 is continuous. (See Remark~\ref{Pi-remark}.)
 On the other hand, $\Bun(\Oc_\xi)/G$ is a torus, by Lemma~\ref{trivial}, , and 
 $\kappa(\Bun(\Oc_\xi)/G)\subseteq S_\infty\simeq\RR^\times$,
 for every $\xi \in \gg^*\setminus [\gg, \gg]^\perp$.
 Hence
 $\kappa(\Bun(\Oc_\xi)/G)$ must be a point in $S_\infty$. 
 It follows that $\widehat{\pi_1(\Oc_\xi)} =\{1\}$, therefore 
 $\pi_1(\Oc_\xi)=\{\0\}$. 
 We have thus obtained that $\Oc_\xi$ is simply connected, hence $G(\xi)$ is connected 
 for $\xi \in \gg^*\setminus [\gg, \gg]^\perp$. 
If $\xi \in [\gg, \gg]^\perp$ then $G(\xi)=G$. 
Thus, for arbitrary $\xi\in\gg^*$, the stabilizer $G(\xi)$ is connected. 
Then, using the diffeomorphism defined by the coadjoint action, 
$G/G(\xi)\to\Oc_\xi$, $gG(\xi)\mapsto g\xi$, and the fact that the Lie group $G$ is simply connected, 
it follows that the coadjoint orbit $\Oc_\xi$ is simply connected, too. 
 \end{proof}

\begin{proof}[Proof of Proposition~\ref{main_thm}]
By Lemma~\ref{4} and the discussion before it, the coadjoint orbits of the group $G$ are simply connected and closed in $\gg^*$. 
It follows that $G$ is nilpotent, by Theorem~\ref{IRS}. 
Now the statement is a consequence of the fact that the Heisenberg group is $C^*$-rigid within the class of exponential Lie groups, cf. \cite[Thm.~1.2]{BB-JTA}.
\end{proof}

\subsection{Beyond solvable Lie groups} 

\begin{lemma}
	\label{stack1}
	If $X$ is a connected topological space and there exist topological spaces $X_1$ and $X_2$, 
	containing each of them at least two points, satisfying $X= X_1\times X_2$, 
	then the complement of every singleton subset of $X$ is connected. 
\end{lemma}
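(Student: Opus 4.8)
The plan is to reduce everything to threading together the horizontal and vertical slices of the product. First I would record that, since $X = X_1 \times X_2$ is connected, both factors $X_1$ and $X_2$ are connected, being the images of $X$ under the (continuous, surjective) coordinate projections. Then I would fix the singleton to be removed, writing it as $p = (a,b)$, and use the hypothesis that each factor has at least two points to choose $a' \in X_1 \setminus \{a\}$ and $b' \in X_2 \setminus \{b\}$.

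The key idea is that a naive ``product-complement'' argument fails, because $X_1 \setminus \{a\}$ and $X_2 \setminus \{b\}$ need not be connected; instead one stitches $X \setminus \{p\}$ together from \emph{full} slices that avoid $p$. Concretely, I would set $C := (X_1 \times \{b'\}) \cup (\{a'\} \times X_2)$. Each of the two pieces is homeomorphic to a factor, hence connected; they meet at the point $(a', b')$; and since $a' \neq a$ and $b' \neq b$, neither piece contains $p$. Thus $C$ is a connected subset of $X \setminus \{p\}$, which will serve as a common ``spine''.

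Next I would check that every point $q = (x,y) \in X \setminus \{p\}$ lies in a connected subset of $X \setminus \{p\}$ meeting $C$. Since $q \neq p$, at least one of $x \neq a$ or $y \neq b$ holds. If $y \neq b$, the horizontal slice $X_1 \times \{y\}$ is connected, avoids $p$, contains $q$, and meets the vertical part of $C$ at $(a', y)$. If instead $x \neq a$, the vertical slice $\{x\} \times X_2$ is connected, avoids $p$, contains $q$, and meets the horizontal part of $C$ at $(x, b')$. Either way $q$ is linked to $C$ inside $X \setminus \{p\}$.

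Finally I would invoke the standard fact that the union of a family of connected sets, each meeting a fixed connected set, is connected: expressing $X \setminus \{p\}$ as $C$ together with all the slices produced above yields the conclusion. I do not anticipate a genuine obstacle here; the only subtlety---and the precise reason the ``at least two points in each factor'' hypothesis is indispensable---is that one must route through the auxiliary coordinates $a', b'$ distinct from $a, b$, since deleting a point from a single factor may well disconnect it.
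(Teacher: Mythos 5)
Your proof is correct and follows essentially the same route as the paper's: both arguments stitch $X\setminus\{p\}$ together from full horizontal and vertical slices that avoid the deleted point, using the auxiliary coordinates $a'\neq a$, $b'\neq b$ guaranteed by the two-point hypothesis. The only cosmetic difference is that you fix one cross-shaped spine $C$ and attach a single slice per remaining point, whereas the paper builds a two-slice cross joining each point to a fixed reference point $y=(y_1,y_2)$; the underlying idea is identical.
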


\begin{proof}
	For arbitrary $x=(x_1,x_2)\in X_1\times X_2=X$ we must show that $X\setminus\{x\}$ is connected. 
	To this end we use the hypothesis that $X_1$ and $X_2$, 
	contains at least two points, hence there exist $y_j\in X_j\setminus\{x_j\}$ for $j=1,2$. 
	Then $y:=(y_1,y_2)\in X\setminus\{x\}$,  
	and we claim that for arbitrary $a=(a_1,a_2)\in X\setminus\{x\}$ there exists a connected subset $C\subseteq X\setminus\{x\}$ with $a,y\in C$. 
	
	We first note that the Cartesian projection map of $X=X_1\times X_2$ onto~$X_j$ 
	is continuous for $j=1,2$ and, since the image of a connected space through a continuous map is connected, it follows that both $X_1$ and $X_2$ are connected topological spaces. 
	Since $(a_1,a_2)=a\ne x=(x_1,x_2)$, there are two possible cases. 
	
	Case $a_1\ne x_1$. 
	We then define $C:=(\{a_1\}\times X_2)\cup(X_1\times\{y_2\})\subseteq X$. 
	Since $a_1\ne x_1$ and $y_2\ne x_2$, we have $(x_1,x_2)\not\in C$, hence $C\subseteq X\setminus\{x\}$. 
	Both subsets $\{a_1\}\times X_2$ and $X_1\times\{y_2\}$ are connected, since they are homeomorphic to $X_2$ and $X_1$, respectively. 
	Moreover, $(a_1,y_2)\in (\{a_1\}\times X_2)\cap(X_1\times\{y_2\})$, 
	hence $(\{a_1\}\times X_2)\cap(X_1\times\{y_2\})\ne\emptyset$. 
	In every topological space, the union of two connected subsets having nonempty intersection is again connected, hence $C$ is indeed a connected subset of $X\setminus\{x\}$. 
	Moreover $a,y\in C$, as needed. 
	
	Case $a_2\ne x_2$. 
	We now define $C:=(X_1\times \{a_2\})\cup(\{y_1\}\times X_2)$ and then  
	$C\subseteq X\setminus\{x\}$ since $x_2\ne a_2$ and $x_1\ne y_1$. 
	Moreover, $(y_1,a_2)\in (X_1\times \{a_2\})\cap(\{y_1\}\times X_2)$, 
	and this implies as above that $C$ is connected. 
	In addition, $a,y\in C$. 
	
	This completes the proof of the fact that that for every $a=(a_1,a_2)\in X\setminus\{x\}$ there exists a connected subset $C\subseteq X\setminus\{x\}$ with $a,y\in C$, 
	and which further implies that $X\setminus\{x\}$ is connected. 
\end{proof}

\begin{lemma}
	\label{stack2}
	If $X_1$ and $X_2$ are topological spaces satisfying $\widehat{H_{n}}=X_1\times X_2$, then one of the spaces $X_1$ and $X_2$ is a singleton. 
\end{lemma}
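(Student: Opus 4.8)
The plan is to combine Lemma~\ref{stack1} with two topological features of the spectrum $\widehat{H_n}$: it is connected, yet it possesses a \emph{cut point}, i.e.\ a point whose removal disconnects it. If both $X_1$ and $X_2$ contained at least two points, then Lemma~\ref{stack1} would force the complement of \emph{every} singleton in $\widehat{H_n}=X_1\times X_2$ to be connected, contradicting the existence of a cut point. Hence one of the factors must be a singleton.

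First I would establish connectedness of $\widehat{H_n}=\Gamma_\infty\sqcup\Gamma_1$, with $\Gamma_\infty\simeq\RR^\times$ and $\Gamma_1\simeq\RR^{2n}$. Using the closure criterion recalled above, the closure of the half-line $(0,\infty)\subseteq\Gamma_\infty$ equals $(0,\infty)\cup\Gamma_1$: indeed $0$ is an accumulation point of $(0,\infty)$, so the closure must contain all of $\Gamma_1$; likewise $\overline{(-\infty,0)}=(-\infty,0)\cup\Gamma_1$. Each of these closures is connected, being the closure of a connected set, and they share the nonempty set $\Gamma_1$, so their union $\widehat{H_n}$ is connected.

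Next I would exhibit a cut point. Fix $p\in\Gamma_\infty$ corresponding to some $\lambda_0>0$. The half-line $(\lambda_0,\infty)$ is open in $\Gamma_\infty$, hence open in $\widehat{H_n}$; and its closure in $\widehat{H_n}$ is $[\lambda_0,\infty)$, since $0$ is \emph{not} an accumulation point of $(\lambda_0,\infty)$ and the closure criterion therefore does not attach $\Gamma_1$. Consequently, in $\widehat{H_n}\setminus\{p\}$ the set $(\lambda_0,\infty)$ is simultaneously open and closed, and both it and its complement $\bigl((-\infty,0)\cup(0,\lambda_0)\bigr)\sqcup\Gamma_1$ are nonempty. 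Thus $\widehat{H_n}\setminus\{p\}$ is disconnected, so $p$ is a cut point.

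The decisive point---and the only place where the fine structure of the topology enters---is the asymmetric role of $\Gamma_1$: it lies in the closure of any subset of $\Gamma_\infty$ that accumulates at $0$, but not in the closure of subsets bounded away from $0$. This is precisely what simultaneously makes $\widehat{H_n}$ connected (the two rays of $\Gamma_\infty$ are glued through $\Gamma_1$) and produces cut points (a point interior to a ray cuts off the far part, which never sees $\Gamma_1$). I expect this asymmetric gluing to be the main subtlety, since the naive picture $\Gamma_\infty\simeq\RR^\times$ is itself disconnected; everything hinges on reading off the correct closures from the stated criterion. Once connectedness and the cut point are in hand, the contradiction with Lemma~\ref{stack1} is immediate, and the lemma follows.
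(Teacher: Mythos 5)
Your proposal is correct and follows essentially the same route as the paper: the paper's (very terse) proof likewise observes that removing any point of $\Gamma_\infty$ disconnects $\widehat{H_n}$ and then invokes Lemma~\ref{stack1}. Your write-up merely supplies the details the paper leaves implicit, including the connectedness of $\widehat{H_n}$ needed to apply that lemma, and your closure computations from the stated criterion are accurate.
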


\begin{proof}
	It follows by the description of $\widehat{H_{n}}=\Gamma_\infty\sqcup\Gamma_1$ at the beginning of the present section that for every $\gamma\in\Gamma_\infty$ the space  $\widehat{H_{n}}\setminus\{\gamma\}$ is not connected.
	Thus, the assertion follows by Lemma~\ref{stack1}. 
\end{proof}

\begin{proposition}\label{solv}
	If $G$ is a 1-connected Lie group with $C^*(G)\simeq C^*(H_{n})$ for some $n\ge 1$, 
	then $G$ is a solvable Lie group. 
\end{proposition}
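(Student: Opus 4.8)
The plan is to read three invariants off the isomorphism $C^*(G)\simeq C^*(H_n)$ and use them to annihilate any semisimple part of $G$. First, since $C^*(H_n)$ is CCR, so is $C^*(G)$, whence $G$ is type~$\I$ and $\widehat G$ is homeomorphic to $\widehat{H_n}=\Gamma_\infty\sqcup\Gamma_1$. Second, a $*$-isomorphism carries an irreducible representation to an irreducible representation on the \emph{same} Hilbert space, so the set of dimensions of irreducible representations is an isomorphism invariant; as every point of $\Gamma_1$ is $1$-dimensional and every point of $\Gamma_\infty$ is infinite-dimensional, every irreducible unitary representation of $G$ has dimension $1$ or $\infty$. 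Third, the $1$-dimensional representations are exactly the characters of the $C^*$-algebra, hence $\Hom(G,\TT)\cong\Gamma_1\cong\RR^{2n}$; since $G$ is $1$-connected this yields $\gg/[\gg,\gg]\cong\RR^{2n}$.

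Next I would write the Levi decomposition $\gg=\sg\ltimes\rg$ with $\sg$ semisimple and $\rg$ the radical, the goal being $\sg=\{0\}$. If $\sg$ had a compact simple factor, that factor would carry a finite-dimensional irreducible representation of dimension $\ge 2$; inflating it along $G\to G/R\cong S$ produces an irreducible representation of $G$ of finite dimension $\ge 2$, contradicting the dimension dichotomy. Hence every simple ideal of $\sg$ is noncompact, so $S$ has no nontrivial finite-dimensional unitary representation, i.e.\ every nontrivial $\sigma\in\widehat S$ is infinite-dimensional.

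Now suppose, for contradiction, that $\sg\neq\{0\}$. Using $[\sg,\sg]=\sg$ one computes $\gg/[\gg,\gg]\cong(\rg_{\mathrm{ab}})_{\sg}$, the $\sg$-coinvariants of $\rg_{\mathrm{ab}}:=\rg/[\rg,\rg]$; by complete reducibility of finite-dimensional $\sg$-modules these coincide with the invariants $(\rg_{\mathrm{ab}})^{\sg}$, so the $2n$-dimensional space $\gg/[\gg,\gg]$ is realized as the trivial-isotypic part of $\rg_{\mathrm{ab}}$. Writing $\rg_{\mathrm{ab}}=(\rg_{\mathrm{ab}})^{\sg}\oplus\mg$ with $\mg=\sg\cdot\rg_{\mathrm{ab}}$, the preimage $\jg\subseteq\rg$ of $\mg$ is an ideal of $\gg$ with $\gg/\jg\cong\sg\times\RR^{2n}$ (the action on the abelian quotient being trivial). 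The corresponding analytic subgroup $J$ is closed and $1$-connected (as in Lemma~\ref{quot_orbits}, via \cite[Thm.~11.1.21]{HN12}), so $G/J\cong S\times\RR^{2n}$ is again $1$-connected.

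Finally I would exploit the smallness of $\widehat{H_n}$. Because $G/J$ is a quotient of $G$, the spectrum $\widehat{G/J}\cong\widehat S\times\RR^{2n}$ embeds as a closed subspace of $\widehat G\cong\widehat{H_n}$. Fixing a nontrivial $\sigma\in\widehat S$, which is infinite-dimensional by the second step, the slice $\{\sigma\}\times\RR^{2n}\cong\RR^{2n}$ consists of infinite-dimensional representations and so lands inside the stratum $\Gamma_\infty\simeq\RR^\times$; for $n\ge 1$ this embeds $\RR^{2n}$, of dimension $\ge 2$, into the $1$-manifold $\RR^\times$, which is impossible. This is the quantitative form of the indecomposability of $\widehat{H_n}$: indeed, in the split case $G\cong S\times\RR^{2n}$ one has $\widehat G\cong\widehat S\times\RR^{2n}$, a nontrivial product, contradicting Lemma~\ref{stack2} directly. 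Either way $\sg=\{0\}$ and $G$ is solvable. The hard part is exactly this noncompact semisimple case: the compact factors are removed at once by the dimension dichotomy, whereas excluding a noncompact Levi factor forces one both to pin down the $\sg$-invariant abelian quotient as $\RR^{2n}$ and to pass to the product quotient $S\times\RR^{2n}$, after which the generic stratum of $\widehat{H_n}$ is seen—through topological dimension, or through Lemma~\ref{stack2}—to be too small to host the semisimple directions times $\RR^{2n}$.
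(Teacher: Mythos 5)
Your proof is correct, but it reaches the conclusion by a genuinely different route in its decisive second half. The paper does not use the bare Levi decomposition: it first invokes the $T_1$ property of $\Prim(G)$ together with Pedersen's structure theorem \cite[Th.~1]{Pe83} to get a \emph{direct} product $G=G_1\times(S\rtimes K)$ with $G_1$ semisimple, $K$ compact semisimple and $S$ solvable of class~R; the compact factor $K$ is killed exactly as you kill compact simple ideals (the dimension dichotomy $\dim\pi\in\{1,\infty\}$), and the noncompact factor $G_1$ is then eliminated via nuclearity of $C^*(G_1)$, the homeomorphism $\Prim(G)\simeq\Prim(G_1)\times\Prim(S)$ from \cite[Th.~B.45(c)]{RaWi98}, Lemma~\ref{stack2}, and \cite[Prop.~4]{BCdH94}, with the degenerate case $S=\{\1\}$ excluded by the existence of nontrivial characters of $H_n$. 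You instead avoid Pedersen's theorem and the tensor-product/primitive-ideal machinery entirely: a short coinvariants computation (Weyl's theorem on $\rg_{\mathrm{ab}}$) produces the quotient $G/J\cong S\times\RR^{2n}$, and the contradiction is a topological-dimension count --- a $2n$-parameter continuous injective family of infinite-dimensional classes cannot sit inside $\Gamma_\infty\simeq\RR^\times$ (restrict to a circle). This is more elementary and self-contained on the $C^*$-side, at the cost of a small Lie-algebra argument; the paper's route is computation-free on the Lie-algebra side but leans on heavier structure theory. Two points you should make explicit to be airtight: (i) the conclusion $\dim(\gg/[\gg,\gg])=2n$ (rather than merely $\ge 1$) uses that the homeomorphism $\widehat G\simeq\widehat{H_n}$ restricts to a homeomorphism of the spaces of one-dimensional classes, identified with $\RR^{\dim\gg/[\gg,\gg]}$ and $\Gamma_1\simeq\RR^{2n}$ in their standard topologies (this is needed because your final contradiction requires dimension at least $2$); and (ii) the continuity of $t\mapsto\sigma\otimes\chi_t$ into $\widehat G$ in the Fell topology --- note that you do not actually need the full homeomorphism $\widehat{S\times\RR^{2n}}\simeq\widehat S\times\RR^{2n}$, only this one continuous injective slice, which makes your argument even lighter than stated. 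Neither point is a gap; both are standard.
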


\begin{proof}
	Since $C^*(G)\simeq C^*(H_{n})$, it follows that 
	$\Prim(G)$ is a $T_1$ topological space. 
	Since the Lie group $G$ is 1-connected, 
	it then follows that $G=G_1\times G_2$, where $G_1$ is a 1-connected semisimple Lie group 
	and $G_2$ is a 1-connected Lie group whose radical is cocompact and class~$R$. 
	(See e.g., \cite[Th. 1]{Pe83} and the references therein.)
	Using the Levi-Malcev decomposition and denoting by $S$ the radical of~$G_2$, 
	we obtain a semidirect product decomposition $G_2=S\rtimes K$, where 
	$K$ is a 1-connected semisimple Lie group. 
	(See \cite[Cor. 5.6.8 and Prop. 11.1.19]{HN12}.)
	Summarizing, 
	$$G=G_1\times (S\rtimes K)$$ 
	where $G_1$ is a 1-connected semisimple Lie group, $K$ is a 1-connected compact semisimple Lie group, and $S$ is a 1-connected solvable Lie group of class~R. 
	Here $G_1\times S$ is a closed normal subgroup of $G$ with $G/(G_1\times S)=K$, 
	hence every irreducible unitary representation $\pi\colon K\to\Bc(\Hc_\pi)$ lifts to an irreducible unitary representation of $G$ on $\Hc_\pi$, which further integrates to an irreducible $*$-representation of $C^*(G)\simeq C^*(H_{n})$ on $\Hc_\pi$. 
	If $K\ne\{\1\}$ then, since $K$ is a 1-connected compact Lie group, it follows that $K$ is actually a compact semisimple (hence noncommutative) Lie group, hence its unitary irreducible representation $\pi$ satisfies $1<\dim\Hc_\pi<\infty$. 
	But then one cannot have an unitary irreducible representation of the Heisenberg group $H_{n}$ on $\Hc_\pi$. 
	Therefore $K=\{\1\}$, and we obtain 
$$G=G_1\times S.$$
Here  $G_1$ is a 1-connected semisimple Lie group, hence CCR, 
and then its full $C^*$-algebra $C^*(G_1)$ is nuclear. 
(See e.g., \cite[Cor. B.49]{RaWi98}.) 
We further obtain $C^*(G)=C^*(G_1)\widehat{\otimes} C^*(S)$ by e.g., 
\cite[Prop. 3.11 and Lemma 2.73]{Wi07}. 
Here $\widehat{\otimes}$ stands for the unique $C^*$-tensor product, 
	i.e., the completion of the algebraic tensor product with respect to its unique $C^*$-norm, which makes sense since 
 $C^*(G_1)$ is nuclear, cf. \cite[page 258]{RaWi98}. 
Moreover, 
both $C^*(G_1)$ and $C^*(S)$ are separable, 
we may use \cite[Th. B.45(c)]{RaWi98} to obtain a homeomorphism $\Prim(G_1)\times\Prim(S)\simeq \Prim(G)$. 

As $\Prim(G)$ is homeomorphic to $\Prim(H_{n})$ by hypothesis, 
it then follows by Lemma~\ref{stack2} that either $\Prim(S)$ or $\Prim(G_1)$ is a singleton. 
Since $S$ and $G$ are connected groups, this implies by \cite[Prop. 4]{BCdH94} that either $G_1=\{\1\}$ or $S=\{\1\}$. 

If  $S=\{\1\}$, then $G=G_1$ is semisimple, 
hence the only 1-dimensional irreducible unitary representation of $G$ is the trivial one, 
and this is also the case for $C^*(G)$. 
On the other hand, by hypothesis $C^*(G)\simeq C^*(H_{n})$, 
and the Heisenberg group~$H_{n}$ (hence also its $C^*$-algebra) admits nontrivial 1-dimensional irreducible unitary representations. 
This contradiction shows that we actually have $G_1=\{\1\}$, hence $G=S$ is solvable, and this completes the proof. 
\end{proof}

\begin{proof}[Proof of Theorem~\ref{main-main_thm}] 
The result is a consequence of Propositions~\ref{solv} and ~\ref{main_thm}.
\end{proof}

\subsection*{Acknowledgment}
We wish thank the Referee for carefully reading our manuscript 
and for many pertinent remarks that have improved the exposition.

\end{document}